\setlist{itemsep=0em} 
\setlist[enumerate]{label=(\roman*)}
\crefname{algocf}{Algorithm}{Algorithms}
\newif\ifbiber
\DeclareCiteCommand{\cite}{%
	\ifbibmacroundef{cite:init}{}{\usebibmacro{cite:init}}\usebibmacro{prenote}%
}{%
	\usebibmacro{citeindex}%
	\printtext[bibhyperref]{\usebibmacro{cite}}%
}{%
	\ifbibmacroundef{cite:init}{\multicitedelim}{}%
}{%
	\usebibmacro{postnote}%
}%
\DeclareCiteCommand{\parencite}[\mkbibbrackets]{%
	\ifbibmacroundef{cite:init}{}{\usebibmacro{cite:init}}\usebibmacro{prenote}%
}{%
	\usebibmacro{citeindex}%
	\printtext[bibhyperref]{\usebibmacro{cite}}%
}{%
	\ifbibmacroundef{cite:init}{\multicitedelim}{}%
}{%
	\usebibmacro{postnote}%
}%
\let\cite\parencite
\newcommand\N{\mathbb{N}}
\newcommand\Z{\mathbb{Z}}
\newcommand\R{\mathbb{R}}
\newcommand\LL{\mathcal{L}}
\newcommand\VV{\mathcal{V}}
\renewcommand\d{\mathrm{d}}
\newcommand\dx{\d x}
\newcommand\ds{\d s}
\newcommand{\weakly}{\rightharpoonup}
\renewcommand\div{\operatorname{div}}
\DeclareMathAlphabet{\mathpzc}{OT1}{pzc}{m}{it}
\newcommand\bvarepsilon{{\boldsymbol{\varepsilon}}}
\DeclareMathSymbol{\dprod}{\mathbin}{operators}{"3A}
\newcommand\trace{\operatorname{trace}}
\newcommand\bi{\mathbf{i}}
\renewcommand\O{\Omega}
\newcommand\Oh{{\O_h}}
\newcommand\tu{\widetilde{u}}
\newcommand\hu{\widehat{u}}
\newcommand\veps{\varepsilon}
\newcommand\dist{\operatorname{dist}}
\newcommand\p{\partial}
\newcommand\cE{\mathcal{E}}
\newcommand\cH{\mathcal{H}}
\newcommand\cI{\mathcal{I}}
\newcommand\cS{\mathcal{S}}
\newcommand\cT{\mathcal{T}}
\newcommand\hO{\widehat{\O}}
\newcommand\hcT{\widehat{\cT}}
\newcommand\hcI{\widehat{\cI}}
\newcommand\dott[1]{{\buildrel{\hspace*{.03em}\text{\LARGE\textrm{.}}}\over{#1}}}
\newtheorem{theorem}{Theorem}[section]
\newtheorem{lemma}[theorem]{Lemma}
\newtheorem{proposition}[theorem]{Proposition}
\newtheorem{remark}[theorem]{Remark}
\newtheorem{example}[theorem]{Example}
\crefname{assumption}{Assumption}{Assumptions}
\newcommand{\sbb}[1]{#1} 
\newcommand\alert[1]{\textcolor{red}{\ifmmode #1 \else \textbf{#1}\fi}}
\definecolor{darkgreen}{rgb}{0,0.5,0}
\definecolor{darkred}{rgb}{0.8,0,0}
\begin{document}
\title{Numerical approximation of optimal convex shapes}

\author{%
	Sören Bartels%
	\footnote{%
                Albert-Ludwigs-Universit\"at Freiburg, 
                Department of Applied Mathematics, 
                79104 Freiburg i.Br., 
                Germany
		\url{https://aam.uni-freiburg.de/agba},
		\email{bartels@mathematik.uni-freiburg.de}%
	}
	\and
	Gerd Wachsmuth%
	\footnote{%
		Brandenburgische Technische Universität Cottbus-Senftenberg,
		Institute of Mathematics,
		03046 Cottbus,
		Germany,
		\url{https://www.b-tu.de/fg-optimale-steuerung},
		\email{gerd.wachsmuth@b-tu.de}%
	}%
	\orcid{0000-0002-3098-1503}%
}
\publishers{}
\maketitle

\begin{abstract}
This article investigates the numerical approximation of shape optimization problems 
with PDE constraint on classes of convex domains. The convexity 
constraint provides a compactness property which implies well posedness 
of the problem.
Moreover, we prove the convergence of discretizations in two-dimensional situations.
A numerical algorithm is devised that iteratively solves the discrete 
formulation. Numerical experiments show that optimal convex shapes are generally 
non-smooth and that three-dimensional problems require an appropriate relaxation of the
convexity condition. 
\end{abstract}
 
\begin{keywords}
Shape optimization, PDE constraints, convexity, convergence, iterative solution
\end{keywords}
 
\begin{msc}
	\mscLink{49Q10}, \mscLink{65N12}
\end{msc}

\section{Introduction}
\label{sec:introduction} 
Shape optimization has become a popular area of research in applied mathematics
and is relevant in various technological applications. Existence theories and 
convergence results for numerical approximation methods often depend on
appropriate regularizations, e.g., via a perimeter functional. In this paper 
we consider PDE (partial differential equation) constrained shape optimization problems that are restricted
to classes of convex shapes. A convexity condition appears reasonable in many 
applications and may in some situations replace a more natural but mathematically 
more involved connectedness restriction. Imposing constraints on sets of admissible
shapes is often necessary to guarantee the existence of a solution, cf., e.g.,
\cite{BucurButtazzoNitsch2017,BartelsButtazzo2018} for a problem occurring in 
optimal insulation. 

We consider a model PDE constrained shape optimization problem with convexity 
constraint which reads as follows:
\begin{equation}
	\label{eq:prob}
	\tag{$\textbf{P}$}
	\begin{aligned}
		\text{Minimize}\quad & \int_\Omega j(x, u(x), \nabla u(x)) \, \dx \\
		\text{w.r.t.}\quad & \Omega \subset \R^d, \, u \in H_0^1(\Omega) \\
		\text{s.t.}\quad& -\Delta u = f \text{ in }\O \text{ and } \O \subset Q	\text{ convex and open}
	\end{aligned}
\end{equation}
Here, $Q \subset \R^d$ is a bounded, convex and open hold-all domain,
$j : Q \times \R \times \R^d \to \R$ is a suitable Carathéodory
function and the right-hand side in the state equation which is assumed to satisfy
$f \in L^2(Q)$. We stress that the quantity of interest is the a~priori unknown
shape $\Omega$. Particular attention is in this article paid to the convexity
constraint which enables us to prove existence of solutions and convergence
of approximations but which also leads to difficulties in its appropriate numerical
treatment. After establishing existence of solutions under suitable assumptions on
the function $j$, we prove convergence of numerical approximation schemes for
two-dimensional problems. We then devise an iterative scheme for computing 
optimal shapes and discuss numerical experiments which reveal that optimal convex
shapes are typically non-smooth and that the convexity constraint has to be relaxed
in three-dimensional situations. 

There are very few references discussing existence results for shape optimization 
problems with convexity constraints. To our knowledge, the first one is 
\cite{ButtazzoGuasoni1997} where 
the authors study the minimization of certain geometric functionals defined via
\begin{equation*}
	F(\Omega) = \int_{\partial\Omega} j(x, \nu(x))\,\d\cH^{d-1}.
\end{equation*}
Here, $\cH^{d-1}$ is the $(d-1)$-dimensional Hausdorff measure
and $\nu(x)$ is the (outer) normal vector of $\Omega$ at $x$
(which exists $\cH^{d-1}$-a.e.\ on $\partial\Omega$).
Existence results for shape optimization problems
involving convexity and PDE constraints
are given in \cite{VanGoethem2004,Yang2009}.
In the former contribution,
linear elliptic PDEs of even order are considered
and the latter reference addresses the situation of the stationary
Navier-Stokes equation under a smallness assumption on the data. None of these
articles addresses the practical solution of the problems. 

Another interesting result concerning convex shape optimization 
is provided in \cite{Bucur2003} in which regularity of optimal convex shapes 
for a class of objectives is proved.
In particular, the author considers problem \eqref{eq:prob} with the objective
\begin{equation*}
	\int_Q j(x, u(x)) \, \dx + \alpha \, \int_\Omega 1 \, \dx
\end{equation*}
for some positive regularization parameter $\alpha > 0$.
Note that the first integral ranges over the hold-all domain $Q$.
It is shown that the optimal shape has a $C^1$ boundary
under some weak assumptions,
namely it is required that $j$ is Lipschitz continuous in its second argument
and that $f \in L^\infty(Q)$. Under these conditions the objective value can
be decreased by rounding corners. However, the arguments do not apply to the case
of a vanishing regularization parameter $\alpha = 0$
or if $j$ is integrated only over $\Omega$ which is the situation considered in
this article. 

To our knowledge, problem \eqref{eq:prob} has not been the subject of a 
rigorous numerical analysis yet although some computational schemes have been
devised in the literature. In \cite{LachandRobertOudet2005} the authors
propose an algorithm for minimizing functionals
over sets of convex bodies. Similar to the setting in \cite{ButtazzoGuasoni1997},
they consider only geometric functionals of the form
\begin{equation*}
	F(\Omega) = \int_{\partial\Omega} j(x, \nu(x), \varphi(x))\,\d\cH^{d-1},
\end{equation*}
where $\varphi(x)$ is the signed distance of the supporting hyperplane of $\Omega$ at $x$
to the origin $0$.
The proposed algorithm is based on approximating convex bodies
by the intersection of finitely many half spaces.
Since the topology of this intersection changes throughout the algorithm,
it is not clear how this algorithm can be coupled with a (finite element) discretization 
of a PDE. In the recent preprint \cite{AntunesBogosel2018},
the authors approximate the support function of a convex body by
its Fourier series decomposition ($d = 2$) and by its
spherical harmonic decomposition ($d = 3$). The convexity amounts to an inequality constraint
on the unit sphere $S^{d-1} \subset \R^d$. This convexity constraint is discretized
by its sampling in a finite number of points. 

Other related contributions address optimization problems over classes of convex functions.
The developed methods are of interest for the treatment of problem \eqref{eq:prob} if 
admissible convex shapes can be represented via superlevel sets of convex functions over a
fixed domain, i.e., if they are of the form $\O = \{(x',x_n) \in D \times [-M,0] : \phi(x') \le x_n \le 0\}$
with a convex set $D \subset \R^{d-1}$ and a convex function 
$\phi : D \to [-M, 0]$ for some  $M > 0$. For various approaches to solve optimization 
problems over sets of convex functions, we refer to \cite{Mirebeau2016}
and the references therein.

The outline of this article is as follows. In \cref{sec:prelim} we review
formulas for shape derivatives in the absence of a convexity constraint 
which are important for the formulation of our iterative algorithms. 
\cref{sec:existence}
is devoted to a general existence result which serves as a template for the 
convergence of discretizations which are discussed in \cref{sec:convergence}.
Our iterative algorithm is formulated in \cref{sec:numerics}. Its 
performance and qualitative features of optimal convex shapes are illustrated
in \cref{sec:num_ex}. 

\section{Shape derivatives}
\label{sec:prelim}

In this section we discuss different representations of shape derivatives
for PDE constrained shape optimization problems in the absence of a convexity
constraint. For this, we define the shape functional 
\begin{equation*}
	J(\Omega) :=
	\int_\Omega j(x, u(x), \nabla u(x)) \, \dx,
\end{equation*}
where $u \in H_0^1(\Omega)$ is the solution of the state equation in \eqref{eq:prob}
on $\Omega$, i.e., $u$ is the weak solution of $-\Delta u =f$ in $\O$ subject
to homogeneous Dirichlet boundary conditions. 

\subsection{Weak shape derivative}
\label{subsec:weak_derivative}
Let $\Omega \subset Q$ be a fixed convex and open domain.
We derive expressions for the shape derivative of $J$ at $\Omega$.
We first use the method of \emph{perturbation of identity}.
Since we are only interested in first-order derivatives,
the same expressions are obtained by the \emph{speed method}.
Let a vector field $V \in C^{0,1}_c(Q)$ be given.
For small $t \ge 0$, we introduce the perturbations of the identity
\begin{equation*}
	T_t(x)
	:=
	x + t \, V(x)
	.
\end{equation*}
If $t$ is small enough, $T_t : \bar Q \to \bar Q$
is a bijective Lipschitz mapping with a Lipschitz continuous inverse.
This enables us to define the family of perturbed domains
\begin{equation*}
	\Omega_t
	=
	T_t(\Omega).
\end{equation*}
The state associated with $\Omega_t$ is denoted by $u_t \in H_0^1(\Omega_t)$.
Thus,
\begin{equation*}
	J(\Omega_t)
	=
	\int_{\Omega_t} j(x, u_t(x), \nabla u_t(x)) \, \dx.
\end{equation*}
We are interested in giving an expression for the
\emph{Eulerian derivative}
\begin{equation*}
	J'(\Omega; V)
	:=
	\lim_{t \searrow 0}
	\frac{J(\Omega_t) - J(\Omega)}{t}.
\end{equation*}
We argue formally and note that all the computations can be made rigorous
by imposing appropriate smoothness assumptions on $j$.
To calculate the shape derivative $J(\Omega; V)$,
we use a chain-rule approach utilizing a formula for the
material derivative of the states.
The material derivative $\dott u \in H_0^1(\Omega)$
is defined as
\begin{equation*}
	\dott u
	=
	\lim_{t \searrow 0} \frac{u^t - u}{t}
	\qquad\text{in } H_0^1(\Omega),
\end{equation*}
where $u \in H_0^1(\Omega)$ is the state on $\Omega$
and $u^t = u_t \circ T_t$ is the pull-back of $u_t$.
Note that $u^t \in H_0^1(\Omega)$, see, e.g.,
\cite[Thm.~2.2.2]{Ziemer1989}.
It is well known that $\dott u$ satisfies the variational equation
\begin{equation}
	\label{eq:material_derivative}
	\int_\Omega \nabla \dott u \cdot \nabla w \,\dx
	=
	\int_\Omega \nabla u^\top [ DV + DV^\top - \div(V) \, I]\nabla w + \div(f \, V) \, w \,\dx
\end{equation}
for all $w\in H^1_0(\O)$. Here, $DV$ is the Jacobian of $V$.
Next, we use a substitution rule to obtain
\begin{align*}
	J(\Omega_t)
	&=
	\int_{\Omega_t} j(x, u_t(x), \nabla u_t(x)) \, \dx.
	\\
	&=
	\int_\Omega j(T_t(x), u^t(x), DT_t^{-\top} \nabla u^t(x)) \, \det(DT_t) \, \dx.
\end{align*}
Thus, a (formal) application of the chain rule leads to
\begin{equation*}
	J'(\Omega; V)
	=
	\int_\Omega
	j_x(\cdot) \cdot V
	+
	j_u(\cdot) \, \dott u
	+
	j_v(\cdot) \cdot [\nabla \dott u - DV^\top \nabla u]
	+
	j(\cdot) \, \div(V)
	\,\dx.
\end{equation*}
Here,
$j_x(\cdot)$, $j_u(\cdot)$ and $j_v(\cdot)$ are the partial derivatives of $j$ w.r.t.\ its three arguments.
Moreover, we have abbreviated the point
$(x, u(x), \nabla u(x))$
by the placeholder $(\cdot)$.
In order to get rid of the material derivative $\dott u$,
we introduce the adjoint state $p \in H_0^1(\Omega)$
as the solution of
\begin{equation}
	\label{eq:adjoint_equation}
	\int_\Omega \nabla p\cdot \nabla w \, \dx
	=
	-\int_\Omega j_u(\cdot) \, w + j_v(\cdot) \cdot \nabla w \, \dx
\end{equation}
for all $w \in H_0^1(\Omega)$.
Using the test function $w = \dott u$ in \eqref{eq:adjoint_equation}
and $w = p$ in \eqref{eq:material_derivative}, we obtain our final expression
for the shape derivative
\begin{equation}
	\label{eq:shape_derivative}
	\begin{aligned}
	J'(\Omega; V)
	=
	\int_\Omega
	&j_x(\cdot) \cdot V
	-
	j_v(\cdot) \cdot DV^\top \nabla u
	+
	\nabla p^\top \bigl[-DV - DV^\top + \div(V) \,I\bigr]\nabla u
	\\
	&\qquad
	-
	\div(f \, V) \, p
	+
	j(\cdot) \, \div(V)
	\,\dx
	.
	\end{aligned}
\end{equation}
It is also known that a very similar expression
can be obtained for finite-element discretizations
of the shape functional,
see, e.g.,
\cite{DelfourPayreZolesio1985}
or \cite[Remark~2.3 in Chapter~10]{DelfourZolesio2011}.

\subsection{Hadamard derivative}
\label{subsec:hadamard_derivative}
Next, we are going to derive the shape derivative
in the Hadamard form, i.e., in the form
\begin{equation*}
	J'(\O; V)
	=
	\int_{\partial\O} g \, (V \cdot n) \, \ds
\end{equation*}
with an appropriate function $g$ on the boundary.
To this end, we fix a domain $\Omega$ with a $C^2$ boundary.
We follow~\cite[Section~6.2 in Chapter~10]{DelfourZolesio2011} and rewrite the 
minimization of 
\[
J(\O) = \int_\O j(x,u(x),\nabla u(x)) \,\dx 
\]
subject to the constraint 
\[
-\Delta u = f \text{ in }\O, \quad u|_{\p\O} = 0,
\]
as a saddle-point problem.
Therefore, we introduce the functional
\[
	L(\O,u,p) = \int_\O (-\Delta u - f) \, p \,\dx  - \int_\O \div (u \nabla p) \,\dx
\]
to incorporate the PDE constraint. Here, $u, p \in H^2(\Omega)$. The Lagrangian is given by
\begin{equation*}
	G(\O, u, p)
	=
	\int_\O j(x,u(x),\nabla u(x))\,\dx + L(\O,u,p).
\end{equation*}
Now, we check that
\begin{equation}
	\label{eq:saddle-point}
	J(\O) = \min_{u \in H^2(\O)} \max_{p \in H^2(\O)} G(\O,u,p)
	.
\end{equation}
An integration by parts yields that
\begin{equation*}
	L(\O,u,p)
	=
	\int_\O (-\Delta u - f) \, p \,\dx
	-
	\int_{\partial\O} u \, \frac{\partial p}{\partial n} \,\dx
	.
\end{equation*}
Using localized test functions $p \in H^2(\Omega)$,
it follows that
\begin{equation*}
	\max_{p \in H^2(\O)} G(\O,u,p)
	=
	\begin{cases}
		\int_\O j(x,u(x),\nabla u(x))\,\dx
		&\text{if } {-\Delta u} = f \text{ in } \Omega
		\text{ and } u|_{\p\O} = 0,
		\\
		+\infty
		&\text{else}.
	\end{cases}
\end{equation*}
This justifies \eqref{eq:saddle-point}.
Hence, the first component~$u$ of a saddle point $(u,p) \in H^2(\Omega)^2$ satisfies
\begin{equation}\label{eq:hadamard_primal}
	{-\Delta u} = f \text{ in } \O, \quad u|_{\p\O} = 0.
\end{equation}
The second component $p$
satisfies
\[
\int_\O j_u(x,u,\nabla u) v + j_v (x,u,\nabla u)\cdot\nabla v 
+ (-\Delta v) \, p - \div (v \nabla p) \,\dx = 0
\]
for all test functions $v \in H^2(\Omega)$. After applying integration by parts
to the term $(\Delta v) p$
and by using the product rule for the last term,
we find that in strong form we have
\begin{equation}
	\label{eq:hadamard_adjoint}
	{-\Delta p} = -j_u(x,u,\nabla u) + \div  j_v(x,u,\nabla u) \text{ in } \O, \quad
	p|_{\p\O} = 0.
\end{equation}
We extend the saddle point problem to the whole space $\R^d$ using the sets
\[
X(t) = \big\{U \in H^2(\R^d): U|_{\O_t} = u_t\big\},  \quad 
Y(t) = \big\{P \in H^2(\R^d): P|_{\O_t} = p_t\big\},
\]
where for a given set $\O_t$ the functions $u_t$ and $p_t$ 
solve~\eqref{eq:hadamard_primal} and~\eqref{eq:hadamard_adjoint} in~$\O_t$.
Formally, assuming the existence of a sufficiently regular saddle point,
it follows from a theorem by Correa and Seeger, see
\cite[Thm.~5.1, p.~556]{DelfourZolesio2011}, for the shape derivative of $J$
given a velocity field $V$ that 
\[
dJ(\O;V) = \min_{U\in X(0)} \max_{P \in Y(0)} \p_t G(\O_t,U,P)|_{t=0}.
\]
In particular, it follows with $u = u_0$ and $p=p_0$ and $\O = \O_0$ that 
\[
dJ(\O;V) = \int_{\p\O} \Big\{j(x,u(x),\nabla u(x)) 
- (\Delta u + f) p - \div(u\nabla p) \Big\} \, (V\cdot n) \,\ds.
\]
Incorporating the characterizations of $u$ and $p$, i.e., $-\Delta u = f$, $u=0$,
and $p=0$ on $\p\O$ this reduces to 
\[
	dJ(\O;V) = \int_{\p\O} \Big\{j(x,u(x),\nabla u(x)) -  \frac{\p u}{\p n} \, \frac{\p p}{\p n} \Big\} V\cdot n \,\ds,
\]
since tangential derivatives of $p$ on $\p\O$ vanish and hence $\nabla p = (\p p/\p n) \, n$.

\section{Existence of optimal convex shapes}
\label{sec:existence}
For convenience, we repeat the PDE constrained shape optimization problem
formulated in the introduction:
\begin{equation}
	\label{eq:prob_2}
	\tag{$\textbf{P}$}
	\begin{aligned}
		\text{Minimize}\quad & \int_\O j(x, u(x), \nabla u(x)) \, \dx \\
		\text{w.r.t.}\quad & \O \subset \R^d, u \in H_0^1(\O) \\
		\text{s.t.}\quad
		& -\Delta u = f\text{ in }\O \text{ and }\O \subset Q \text{ convex and open }
	\end{aligned}
\end{equation}
Here, $Q\subset \R^d$ is bounded and convex and $f\in L^2(Q)$. Conditions on 
$j:Q \times \R\times \R^d \to \R$ are formulated below. We remark that the empty 
set $\Omega = \emptyset$ is an admissible point of \eqref{eq:prob} with objective value $0$.

\begin{proposition}[Existence]\label{prop:existence}
Assume that $j$ is a Carathéodory function
(i.e., measurable in the first and continuous in its remaining
arguments) and that there exist $a\in L^1(Q)$ and $c\ge 0$ such that 
\[
|j(x,s,z)| \le a(x) + c (|s|^p + |z|^2)
\]
with $p=2d/(d-2)$ if $d >2$ and $p<\infty$ if $d=2$. 
Then there exists an optimal pair $(\O,u)$ for \eqref{eq:prob_2}.
\end{proposition}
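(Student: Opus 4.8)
The plan is to use the direct method of the calculus of variations, exploiting the compactness that convexity provides. Let me sketch the steps.

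Let $(\O_n, u_n)$ be a minimizing sequence for \eqref{eq:prob_2}; since $\O = \emptyset$ is admissible with objective value $0$, the infimum is finite and nonpositive. First I would extract a convergent subsequence of domains. Each $\O_n$ is convex and contained in the bounded hold-all $Q$, so the closures $\overline{\O_n}$ lie in a compact metric space of convex subsets of $\overline Q$ equipped with the Hausdorff distance; by the Blaschke selection theorem, after passing to a subsequence, $\overline{\O_n} \to K$ in the Hausdorff metric for some compact convex $K \subset \overline Q$. Set $\O := \operatorname{int} K$, which is convex and open. It is a standard fact (Hausdorff convergence of convex bodies together with convergence of boundaries) that, provided $\O$ is nonempty, this convergence is also in the sense of characteristic functions $\mathbf 1_{\O_n} \to \mathbf 1_{\O}$ in $L^1(Q)$ and that one has the required $L^p$-type convergence of the state extensions below. (If the limit $K$ has empty interior, then $|\O_n| \to 0$; the corresponding states $u_n$ tend to $0$ and the objective values tend to $0$, so the empty set is optimal and we are done.)

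Next I would pass to the limit in the state equation. Extend each $u_n$ by zero to all of $Q$; testing $-\Delta u_n = f$ with $u_n$ and using the Friedrichs inequality on the bounded set $Q$ gives a uniform bound $\|u_n\|_{H_0^1(Q)} \le C \|f\|_{L^2(Q)}$, so after a further subsequence $u_n \weakly u$ in $H_0^1(Q)$ and $u_n \to u$ strongly in $L^2(Q)$ (Rellich); by Sobolev embedding we also get $u_n \to u$ strongly in $L^p(Q)$ with the exponent $p$ from the hypothesis (for $d=2$ in every $L^p$, $p<\infty$; for $d>2$ with $p = 2d/(d-2)$ this is the borderline case, which still holds strongly on the bounded domain since the embedding $H^1 \hookrightarrow L^q$ is compact for every $q < 2d/(d-2)$ and bounded for $q = 2d/(d-2)$ — I would actually invoke strong $L^q$ convergence for some $q$ slightly below the critical exponent and combine it with the uniform $L^{2^*}$ bound via interpolation, or restrict attention to $d=2$ where the convergence result of the next section is proved). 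The key point is that $u \in H_0^1(\O)$: since $u_n = 0$ a.e.\ on $Q \setminus \O_n$ and the sets $Q\setminus\O_n$ exhaust $Q \setminus \O$ from inside (by Hausdorff convergence of the convex complements), one shows $u = 0$ a.e.\ on $Q\setminus\O$, and the convexity (hence Lipschitz boundary) of $\O$ guarantees $H_0^1(\O) = \{w \in H_0^1(Q) : w = 0 \text{ a.e.\ on } Q\setminus\O\}$. Passing to the limit in $\int_{Q} \nabla u_n \cdot \nabla \varphi = \int_Q f\varphi$ for $\varphi \in C_c^\infty(\O)$ (such $\varphi$ are eventually supported in $\O_n$) shows $-\Delta u = f$ in $\O$, so $(\O, u)$ is admissible.

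Finally I would pass to the limit in the objective using lower semicontinuity. Write $J(\O_n, u_n) = \int_Q \mathbf 1_{\O_n}\, j(x, u_n, \nabla u_n)\,\dx$. We have $u_n \to u$ strongly in $L^2$ and in $L^p$ (so, along a further subsequence, a.e.\ and dominated by a fixed $L^p$ function), $\mathbf 1_{\O_n}\to \mathbf 1_{\O}$ a.e., and $\nabla u_n \weakly \nabla u$ in $L^2(Q)^d$. The integrand $z \mapsto j(x,s,z)$ is continuous but not assumed convex, so one cannot directly use weak lower semicontinuity of $J$ in $\nabla u$; however, I would first upgrade to \emph{strong} convergence $\nabla u_n \to \nabla u$ in $L^2(\O)$. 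This follows from the state equations: on a fixed compactly contained subdomain $\O' \Subset \O$ (where all $\O_n$ eventually contain $\O'$), $\int_{\O'} |\nabla(u_n - u)|^2 = \int_{\O'} \nabla u_n\cdot\nabla(u_n-u) - \int_{\O'}\nabla u\cdot\nabla(u_n-u)$, and using $-\Delta u_n = f = -\Delta u$ tested against $u_n - u$ (with suitable cutoffs to handle the boundary) together with $u_n\to u$ in $L^2$ drives the right-hand side to $0$; combined with the uniform bound and $\mathbf 1_{\O_n}\to\mathbf 1_{\O}$ in $L^1$ one concludes $\nabla u_n\to\nabla u$ strongly in $L^2(Q)^d$. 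With strong convergence of $(u_n, \nabla u_n)$ and the growth bound $|j(x,s,z)| \le a(x) + c(|s|^p + |z|^2)$ providing an $L^1(Q)$-dominating function for the sequence $\mathbf 1_{\O_n} j(x, u_n, \nabla u_n)$, the generalized dominated convergence theorem (Vitali / variable-dominating-function version) yields $J(\O_n, u_n) \to J(\O, u)$. Hence $J(\O, u) = \inf \eqref{eq:prob_2}$ and $(\O, u)$ is optimal.

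The main obstacle I expect is twofold: first, establishing that the Hausdorff limit $\O$ satisfies $H_0^1(\O) = \{w \in H_0^1(Q): w=0 \text{ on } Q\setminus\O\}$ and that the zero-extended states converge into this space — this rests on the stability of the Sobolev capacity / $H_0^1$-characterization under Hausdorff convergence of convex sets, a classical but nontrivial fact (Šverák-type continuity, here made easy by convexity). Second, promoting weak to strong convergence of the gradients so as to handle a merely continuous (non-convex in $z$) integrand $j$; without this upgrade one would need a convexity assumption on $j(x,s,\cdot)$. The growth bound in the hypothesis is exactly what is needed to make the final passage to the limit in the objective via dominated convergence rigorous, and the restriction on $p$ (critical Sobolev exponent) is what guarantees the dominating function lies in $L^1$.
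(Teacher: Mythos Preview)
Your proposal is essentially correct and follows the same overall strategy as the paper: direct method, compactness of convex subsets of $Q$, identification of the limiting state via weak convergence, upgrade to strong $H^1$ convergence, and then passage to the limit in the objective via a dominated-convergence type argument. The paper obtains compactness of the domains from the BV result of Buttazzo--Guasoni rather than Blaschke selection, but for convex sets these are interchangeable.

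The one place where your argument is weaker than the paper's is the upgrade from weak to strong convergence of the gradients. Your cutoff approach on $\Omega'\Subset\Omega$ only yields \emph{local} strong convergence, and you cannot pass from this to global strong convergence in $L^2(Q)$ merely from the uniform bound and $\mathbf{1}_{\Omega_n}\to\mathbf{1}_\Omega$ in $L^1$: there is no a priori reason the energy cannot concentrate in the boundary layer $\Omega_n\setminus\Omega'$. The paper sidesteps this entirely with the one-line norm identity
\[
\int_Q |\nabla \tilde u_n|^2 \,\dx = \int_Q f\,\tilde u_n\,\dx \longrightarrow \int_Q f\,\tilde u\,\dx = \int_Q |\nabla \tilde u|^2\,\dx,
\]
obtained by testing the equation on $\Omega_n$ with $u_n$ itself and the limit equation on $\Omega$ with $u$; together with weak convergence this gives strong convergence in $H^1_0(Q)$ immediately. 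This also cleanly disposes of your concern about the critical exponent $p=2d/(d-2)$ for $d>2$: once $\tilde u_n\to\tilde u$ strongly in $H^1_0(Q)$, Sobolev embedding gives strong convergence in $L^p(Q)$ as well, so no interpolation is needed.
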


\begin{proof}
\emph{(i) Choice of an infimizing sequence.} 
Noting that $\|\nabla \tu \|_{L^2(Q)}\le \|\nabla u\|_{L^2(\O)} \le c_P \|f\|_{L^2(Q)}$
holds for every convex domain $\O$ and corresponding solution $u\in H^1_0(\O)$ with trivial
extension $\tu$ to $Q$ of the
Poisson problem, we have that the objective function is 
bounded from below on the set of admissible pairs $(\O,u)$. 
We may thus select an infimizing sequence $(\O_n,u_n)$.

\emph{(ii) Existence of accumulation points.}
Using Lemma~3.1 from~\cite{ButtazzoGuasoni1997} we find that there exists
an open convex set $\O\subset Q$ and a subsequence of $(\O_n)_n$ 
(which is not relabeled in what follows) such that the sequence
of distributional derivatives of the 
characteristic functions $\chi_{\O_n}$ converges in variation to $\chi_\O$.  
In particular, we have by the compact embedding of $BV(\O)$ into $L^1(\O)$
(see, e.g., Theorem~10.1.4 in~\cite{AttouchButtazzoMichaille2014})
that the characteristic functions $\chi_{\O_n}$ converge
strongly in $L^1(Q)$ to $\chi_\O$.
Moreover, it follows from Lemma~4.2 in~\cite{ButtazzoGuasoni1997} that for 
every $\veps>0$ there exists $N>0$ such that for the symmetric set difference
\[
\O_n \bigtriangleup \O = (\O\setminus \O_n) \cup (\O_n\setminus \O)
\]
we have for all $n\ge N$ that 
\begin{equation}\label{eq:uniform_conv_sets}
\O_n \bigtriangleup \O \subset \{x\in Q : \dist(x,\p \O) \le \veps \}.
\end{equation}
We trivially extend the solutions $u_n \in H^1_0(\O_n)$ to functions 
$\tu_n \in H^1_0(Q)$. Since this defines a bounded sequence in $H^1_0(Q)$ 
we may extract a weakly convergent subsequence with limit $\tu \in H^1_0(Q)$. 
We have to show that $\tu|_\O$ solves the Poisson problem on $\O$. For this
let $\phi\in C^1(Q)$ be compactly supported in $\O$.
For $n\ge N$ with $N$ sufficiently large we deduce from~\eqref{eq:uniform_conv_sets}
that $\phi\in C^1_c(\O_n)$. Hence, it follows that
\[
\int_\O \nabla u \cdot \nabla \phi \,\dx 
= \lim_{n\to \infty} \int_Q \nabla \tu_n \cdot \nabla \phi \,\dx 
= \lim_{n\to \infty} \int_Q f \phi \,\dx 
= \int_\O f \phi \,\dx.
\]
It remains to show that
$u := \tu|_\O$
belongs to $H_0^1(\Omega)$.
To this end,
let $A$ be a compact subset in 
$Q\setminus \overline{\O}$. For $n\ge N$ with $N$ sufficiently large we 
have $\tu_n|_A = 0$ and hence
\[
\|\tu \|_{L^2(A)} = \|\tu-\tu_n\|_{L^2(A)} \le \|\tu - \tu_n\|_{L^2(Q)} \to 0
\]
as $n\to \infty$. Hence $\tu|_A =0$ for every compact subset 
$A\subset Q\setminus \overline{\O}$. We may therefore approximate $u$ by
a sequence of smooth functions that are compactly supported in $\O$ 
and this implies $u|_{\p\O}=0$, i.e., $u\in H^1_0(\O)$.

\emph{(iii) Optimality of the limit.} To show that the constructed pair
$(\O,u)$ solves the optimization problem we first note that the (sub-) sequence
$(\tu_n)$ converges strongly to $\tu$ in $H^1_0(Q)$. This is an immediate consequence
of the identities
\[
\int_Q |\nabla \tu|^2 \,\dx = \int_Q f u \,\dx 
= \lim_{n \to \infty}\int_Q f \tu_n \,\dx = \lim_{n \to \infty} \int_Q |\nabla \tu_n|^2 \,\dx.
\]
With the assumptions on $j$ it follows from an application of Fatou's lemma
that the objective functional is $\gamma$-continuous, see
\cite[p.~213]{AttouchButtazzoMichaille2014} for details, i.e., that
\[
\int_{\O_n} j(x,u_n(x),\nabla u_n(x))\,\dx \to \int_\O j(x,u(x),\nabla u(x))\,\dx,
\]
which implies that the pair $(\O,u)$ solves the shape optimization problem. 
\end{proof}

We remark that for a simpler class of objective functionals, a related existence result 
can be found in \cite[Thm.~6.2 in Chapter 6]{DelfourZolesio2001}. Generalizations
of \cref{prop:existence} can be made concerning boundary terms. 

\begin{remark}[Boundary terms]
Noting that the sequence of convex sets $(\O_n)$ converges in variation to $\O$
we may incorporate a boundary term
\[
\int_{\p\O} g(x,\nu(x)) d \mathcal{H}^{n-1} = \int_Q f(x,\nu) \d |D\chi_\O|
\]
in the objective functional, cf.~\cite{ButtazzoGuasoni1997}.
\end{remark}

The following example shows that shape optimization problems are often ill posed
if the class of admissible domains is too large.

\begin{example}[Non-existence without convexity assumption]	
	To construct our counterexample,
	we use the classical construction from \cite{CioranescuMurat1997}.
	Let an arbitrary bounded and open set $Q \subset \R^d$ be given.
	We construct a perforated domain, as described in \cite[Ex.~2.1]{CioranescuMurat1997}.
	That is, we choose a sequence $\{\varepsilon_n\}_{n \in \N} \subset (0,\infty)$ 
        with $\varepsilon_n \to 0$ and set
	\begin{equation*}
		r_n =
		\begin{cases}
			\exp^{-\varepsilon_n^{-2}} & \text{if } d = 2, \\
			\varepsilon_n^{d / (d-2)} & \text{if } d > 2.
		\end{cases}
	\end{equation*}
	For each $\bi \in \Z^d$, let $T_\bi^n = B_{r_n}(\varepsilon_n \, \bi)$ be the closed ball with radius $r_n$ centered at $\varepsilon_n \, \bi$.
	Now, the perforated domain is given by
	\begin{equation*}
		\Omega_n = Q \setminus \bigcup_{\bi \in \Z^d} T_\bi^n.
	\end{equation*}
	As $n \to \infty$, both the distance $\varepsilon_n$ and the radius $r_n$ of the holes
	go to $0$.
	Now we define $u_n \in H_0^1(\Omega_n)$ as the weak solution of
	\begin{equation*}
		-\Delta u_n = 1 \quad \text{in } \Omega_n,
		\qquad
		u_n = 0 \quad \text{on } \partial\Omega_n,
	\end{equation*}
	and extend $u_n$ by $0$ to a function in $H_0^1(Q)$.
	By \cite[Thms.~1.2, 2.2]{CioranescuMurat1997}, $u_n$ converges weakly in $H_0^1(Q)$ to
	the weak solution $\hat u \in H_0^1(Q)$ of
	\begin{equation*}
		-\Delta \hat u + \mu \, \hat u = 1 \quad \text{in } Q,
		\qquad
		\hat u = 0 \quad \text{on } \partial Q,
	\end{equation*}
	for some $\mu > 0$.
	For the precise value of $\mu$, we refer to \cite[eq.~(2.3)]{CioranescuMurat1997}.
	After this preparation, we choose the objective
	\begin{equation*}
		j(x, u, g)
		:=
		-1
		+
		(u - \hat u(x))^2.
	\end{equation*}
	Let us check that
	for an arbitrary open set $\Omega \subset Q$ with associated state $u$,
	we have
	\begin{equation*}
		\int_\Omega
		j(x, u(x), \nabla u(x))
		\,\dx
		> - \LL^d(Q).
	\end{equation*}
	Here, $\LL^d$ is the $d$-dimensional Lebesgue measure.
	Indeed,
	it is clear that
	$\LL^d(\Omega) \le \LL^d(Q)$.
	Moreover, one can check that $u \ne \hat u$ in $L^2(\Omega)$.
	This follows, e.g.,
	from inner regularity and
	\begin{equation*}
		-\Delta u(x) = 1
		\ne
		1 - \mu \, \hat u(x) = -\Delta \hat u(x)
	\end{equation*}
	for all inner points $x \in \Omega$.
	On the other hand,
	the sequence $\Omega_n$ together with the associated states $u_n$
	satisfies
	\begin{equation*}
		\int_{\Omega_n}
		j(x, u_n(x), \nabla u_n(x))
		\,\dx
		=
		-\LL^d(\Omega_n)
		+
		\int_{\Omega_n} (u_n - \hat u)^2 \, \dx
		\to
		-\LL^d(Q),
	\end{equation*}
	since the measure of the holes tend to zero
	and
	$u_n \weakly \hat u$ in $H_0^1(Q)$
	implies
	$u \to \hat u$ in $L^2(Q)$.
	Thus, the infimal value $-\LL^d(Q)$
	is not attained.
\end{example}

\section{Convergence of discretizations}
\label{sec:convergence}

In this section we discuss convergence results for suitable discretizations
of problem~\eqref{eq:prob}. The first one is a general statement under moderate
assumptions while the second one requires further conditions but serves as the
basis for the iterative scheme devised in \cref{sec:numerics} below. 

\subsection{Abstract convergence analysis}
For a universal constant $c_{\mathrm{usr}}>0$ we consider the class $\mathbb{T}_{c_{\mathrm{usr}}}$
of conforming, uniformly shape regular triangulations $\cT_h$ of polyhedral
subsets of $\R^d$ such that 
$h_T/\varrho_T \le c_{\mathrm{usr}}$ for all elements $T\in \cT_h$ with diameter $h_T\le h$ and 
inner radius $\varrho_T$. For a discretization fineness $h>0$ we consider the 
following discrete version of~\eqref{eq:prob}:
\begin{equation}
	\label{eq:prob_2_discr}
	\tag{$\textbf{P}_h$}
	\begin{aligned}
		\text{Minimize}\quad & \int_\Oh j(x, u_h(x), \nabla u_h(x)) \, \dx \\
		\text{w.r.t.}\quad & \Oh \subset \R^d, \, 
                      \cT_h \in \mathbb{T}_{c_{\mathrm{usr}}} \text{ triangulation of } \Oh, \, u_h \in \cS^1_0(\cT_h) \\
		\text{s.t.}\quad
		& -\Delta_h u_h = f_h \text{ in }\Oh \text{ and }\Oh \subset Q \text{ convex and open }
	\end{aligned}
\end{equation}
Here, $\cS^1_0(\cT_h)\subset H^1_0(\Oh)$ consists of all piecewise affine, globally
continuous functions in $H^1_0(\Oh)$. The identity $-\Delta_h u_h = f_h$ represents the 
discrete formulation of the Poisson problem, i.e., $u_h\in \cS^1_0(\cT_h)$ satisfies 
\[
\int_\Oh \nabla u_h \cdot \nabla v_h \,\dx = \int_\Oh f v_h \, \dx
\]
for all $v_h \in \cS^1_0(\cT_h)$. Note that the formulation $\eqref{eq:prob_2_discr}$ may not admit
a minimizer since triangulations corresponding to infimizing sequences may not have 
admissible accumulation points
as the number of nodes in the corresponding triangulations might be unbounded.
However, for the statements of the following results only the existence of almost-infimizing
points is necessary. Further constraints such as a bound on the number of elements in $\cT_h$,
e.g., $\#\cT_h\le c h^{-d}$ can be included to obtain a more practical minimization problem
that admits a solution due to finite dimensionality. \sbb{The following estimate provides
an approximation result for certain regular solutions and will be used to justify the general
convergence of the discretization below.}

\begin{proposition}[Consistency]\label{prop:consistency}
Assume that $d=2$. \sbb{Let $(\O,u)$ be such that $\O$ is convex with piecewise 
$C^2$ regular boundary, that the solution $u\in H^1_0(\O)$ of the corresponding
Poisson problem in $\O$ satisfies 
$u\in H^2(\O)\cap W^{1,\infty}(\O)$}, and assume that $j$ is such that 
the functional $J$ defined by 
\[
J(A,v) = \int_A j(x,v(x),\nabla v(x))\, \dx
\]
satisfies for some constant $c_J \ge 0$ the estimate 
\[
\big|J(A,v) - J(A,w)\big| \le  
c_J (1+\|\nabla v\|_{L^2(A)} + \|\nabla w\|_{L^2(A)}) \|\nabla (v-w)\|_{L^2(A)}
\]
for every open set $A\subset Q$ and $v,w\in H^1_0(A)$.
Then there exist admissible tuples $(\Oh,\cT_h,u_h)$ for~\eqref{eq:prob_2_discr} and 
$(\Oh,\hu^{(h)})$ for~\eqref{eq:prob} such that $\Oh \subset \O$, 
\[
\Oh \bigtriangleup \O \subset \{x\in \O: \dist(x,\p\O) \le c h^2\},
\]
and
\[
\|\nabla (u-\tu_h)\|_{L^2(\O)} + \|\nabla (u_h-\hu^{(h)})\|_{L^2(\Oh)} \le c h.
\]
In particular, we have
\[
0 \le J(\Oh,\hu^{(h)}) - J(\O,u) \le c h.
\]
\end{proposition}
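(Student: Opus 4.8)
The plan is to construct the discrete domain $\Oh$ by inscribing a suitable polygon inside $\O$ whose vertices lie on $\p\O$, triangulating it, and then solving the discrete Poisson problem there. First I would exploit the piecewise $C^2$ regularity of $\p\O$: since $d = 2$, the boundary consists of finitely many $C^2$ arcs meeting at corners. I would place nodes along $\p\O$ with spacing of order $h$ so that each boundary edge of the inscribed polygon is a chord subtending an arc of length $\OO(h)$; elementary geometry for $C^2$ curves then gives that the chord lies within distance $\OO(h^2)$ of the arc, which yields the inclusion $\Oh \bigtriangleup \O \subset \{x : \dist(x,\p\O) \le c h^2\}$ and also $\Oh \subset \O$ by convexity (each chord of a convex curve lies inside). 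One must make sure the corners of $\p\O$ are chosen as nodes so that no region near a corner is lost beyond $\OO(h^2)$; this is the one place where ``piecewise'' rather than global $C^2$ matters, but finitely many corners are harmless. Completing the polygon to a triangulation $\cT_h \in \mathbb{T}_{c_{\mathrm{usr}}}$ with mesh size $\le h$ and uniform shape regularity is standard (e.g.\ refine a quasi-uniform background mesh and fit the boundary layer).

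Next I would estimate $\|\nabla(u - \tu_h)\|_{L^2(\O)}$, where $u_h \in \cS^1_0(\cT_h)$ solves the discrete Poisson problem on $\Oh$ and $\tu_h$ is its extension by zero to $\O$. Split this into an interior approximation error and a boundary-strip error. On $\Oh$, since $u \in H^2(\O) \cap W^{1,\infty}(\O)$, the function $u|_{\Oh}$ need not vanish on $\p\Oh$, but it is of size $\OO(h^2)$ there by the $W^{1,\infty}$ bound and the $\OO(h^2)$ distance of $\p\Oh$ from $\p\O$; correcting this boundary data and invoking Céa's lemma together with standard $H^2$ interpolation estimates on the shape-regular mesh gives $\|\nabla(u - u_h)\|_{L^2(\Oh)} \le c h \, \|u\|_{H^2(\O)}$. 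On the strip $\O \setminus \Oh$, which has measure $\OO(h^2)$, the gradient $\nabla u$ is bounded, so $\|\nabla u\|_{L^2(\O\setminus\Oh)} \le c h \, \|\nabla u\|_{L^\infty(\O)}$; since $\tu_h = 0$ there, this contributes $\OO(h)$ as well. Combining yields $\|\nabla(u - \tu_h)\|_{L^2(\O)} \le c h$.

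For the second half of the gradient estimate I would let $\hu^{(h)} \in H^1_0(\Oh)$ denote the exact (continuous) Poisson solution on $\Oh$; this makes $(\Oh, \hu^{(h)})$ admissible for \eqref{eq:prob}. Then $u_h$ is precisely its $\cS^1_0(\cT_h)$-Galerkin approximation, and since $\Oh$ is a convex polygon, $\hu^{(h)} \in H^2(\Oh)$ with $\|\hu^{(h)}\|_{H^2(\Oh)}$ bounded uniformly in $h$ (by $c\|f\|_{L^2(Q)}$, using the convex-domain $H^2$ regularity estimate with a constant controlled by shape regularity and $\diam Q$). Céa plus interpolation then give $\|\nabla(u_h - \hu^{(h)})\|_{L^2(\Oh)} \le c h$, completing the displayed gradient bound. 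Finally, for the objective bound, the left inequality $0 \le J(\Oh, \hu^{(h)}) - J(\O, u)$ is just optimality of $(\O,u)$ among admissible pairs for \eqref{eq:prob} — wait, that direction actually requires $(\O,u)$ to be optimal; re-reading the statement, $(\O,u)$ is an arbitrary regular admissible pair, so I would instead read ``$0 \le$'' as following from $(\O,u)$ being a minimizer, or more safely state the one-sided estimate $J(\Oh,\hu^{(h)}) - J(\O,u) \le c h$ via the assumed Lipschitz property of $J$: write $J(\Oh,\hu^{(h)}) - J(\O,u) = [J(\Oh,\hu^{(h)}) - J(\Oh, \tu_h|_{\Oh})] + [J(\Oh,\tu_h) - J(\O,\tu_h)] + [J(\O,\tu_h) - J(\O,u)]$, bound the first and third by $c_J(1 + \|\nabla\cdot\|_{L^2})\|\nabla(\cdot)\|_{L^2} \le c h$ using the gradient estimates and the uniform $H^1$ bounds, and bound the middle term by $|J(\O\setminus\Oh, \tu_h)| \le c_J(1+\OO(h))\OO(h)$ since $\tu_h$ has $\OO(h)$ gradient on the strip.

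The main obstacle I expect is the interior error estimate on $\Oh$ when $u|_{\p\Oh} \ne 0$: one cannot apply Céa directly to $u$, so the cleanest route is to work with $\hu^{(h)}$ as the pivot (as in the third paragraph) and separately estimate $\|\nabla(u - \hu^{(h)})\|_{L^2(\Oh)}$ — a ``domain perturbation'' estimate for the Poisson problem on $\Oh \subset \O$ with $\Oh \bigtriangleup \O$ in an $\OO(h^2)$-strip — which is where the $W^{1,\infty}$ hypothesis on $u$ is essential and where the geometry of the piecewise-$C^2$ boundary, especially near corners, has to be handled with care.
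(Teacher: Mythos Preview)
Your construction of $\Oh$ by an inscribed polygon and your estimate for $\|\nabla(u-\tu_h)\|_{L^2(\O)}$ agree with the paper (which simply cites standard boundary-approximation literature for the latter), and your $\hu^{(h)}$ is exactly the paper's: the continuous Poisson solution on $\Oh$. The one substantive difference is how you bound $\|\nabla(u_h-\hu^{(h)})\|_{L^2(\Oh)}$. You invoke uniform $H^2$ regularity on the convex polygon $\Oh$ and then apply C\'ea plus interpolation; this is correct, since the $H^2$ constant on convex domains is indeed domain-independent. The paper instead uses a short energy argument that avoids any regularity theory for $\hu^{(h)}$: writing $r^{(h)}=\hu^{(h)}-u_h\in H^1_0(\Oh)$, testing its defining equation with $r^{(h)}$ itself, and using that $u$ (not $\hu^{(h)}$) satisfies $-\Delta u=f$ on the larger domain $\O\supset\Oh$, one obtains directly $\|\nabla r^{(h)}\|_{L^2(\Oh)}\le\|\nabla(u-u_h)\|_{L^2(\Oh)}$, which was already bounded by $ch$. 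The paper's trick is more elementary; your route is slightly heavier but equally valid.

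Your caution about the lower bound $0\le J(\Oh,\hu^{(h)})-J(\O,u)$ is well placed: it indeed requires $(\O,u)$ to be a minimizer, which is not listed among the hypotheses but is how the proposition is applied afterwards. In your telescoping for the upper bound, note that the middle term equals $-\int_{\O\setminus\Oh}j(x,0,0)\,\dx$, and the Lipschitz hypothesis on $J$ alone does not control this (it bounds only differences on a fixed domain); a mild extra assumption such as $j(\cdot,0,0)\in L^\infty(Q)$ makes it $O(h^2)$. The paper's proof does not spell this step out either.
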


\begin{proof}
We first choose an interpolating triangulation $\cT_h$ of $\O$ with 
maximal mesh-size $h>0$. In the two-dimensional situation under consideration, 
the interior of the union of elements in $\cT_h$ 
defines a convex open set $\Oh \subset \O$.
Standard results on boundary approximation, cf., e.g.,~\cite{Dziuk2010} 
and~\cite[Prop. 3.7]{Bart16},
yield that the corresponding finite element approximation of $u$ satisfies the 
asserted estimate. The function
\[
\hu^{(h)} = u_h + r^{(h)}
\]
is defined with the correction $r^{(h)} \in H^1_0(\Oh)$ that satisfies
\[
\int_\Oh \nabla r^{(h)} \cdot \nabla v \,\dx
= \int_\Oh f v \,\dx - \int_\Oh \nabla u_h \cdot \nabla v \,\dx
\]
for all $v\in H^1_0(\Oh)$ so that we have $-\Delta \hu^{(h)} = f$ in 
$\Oh$. It follows that 
\[\begin{split}
\|\nabla r^{(h)} \|_{L^2(\Oh)}^2 
&= \int_\Oh f \, r^{(h)} \,\dx - \int_\Oh \nabla u_h \cdot \nabla r^{(h)} \,\dx \\
&= \int_\Oh \nabla u \cdot \nabla r^{(h)}\,\dx - \int_\Oh \nabla u_h \cdot \nabla r^{(h)} \,\dx \\
&\le \|\nabla (u-u_h)\|_{L^2(\Oh)} \|\nabla r^{(h)}\|_{L^2(\Oh)}.
\end{split}\]
This implies the result.
\end{proof}

\begin{remark}
Note that an interpolating polygonal domain of a convex domain is in general not
convex if $d=3$ so that the arguments of the proof cannot be directly
generalized to that setting.
\end{remark}

The second result concerns the stability of the method, i.e., that
solutions for~\eqref{eq:prob_2_discr} accumulate at solutions for~\eqref{eq:prob}
as the mesh size tends to zero. 

\begin{proposition}[Stability]\label{prop:stability}
Let $d=2$ and assume that $j$ satisfies the conditions of \sbb{\cref{prop:existence}
and~\cref{prop:consistency}}
and let $(\Oh,\cT_h,u_h)_{h>0}$ be a sequence of discrete solutions for~\eqref{eq:prob_2_discr}. 
Then, every accumulation pair $(\O,\cT)$ solves~\eqref{eq:prob}. 
\end{proposition}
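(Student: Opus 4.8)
The plan is to transfer the three-step argument of the proof of \cref{prop:existence} to the discrete problem and to couple it with \cref{prop:consistency} so as to recognize the limit not merely as an admissible pair but as a global minimizer. Let $m$ be the infimal value of~\eqref{eq:prob}, which is finite and attained by \cref{prop:existence}, and set $m_h := J(\Oh,u_h)$, so that $m_h$ lies within $o(1)$ of the infimal value of~\eqref{eq:prob_2_discr}. An accumulation pair $(\O,u)$ arises along a subsequence $h\to 0$ for which, by Lemma~3.1 of~\cite{ButtazzoGuasoni1997}, the convex sets $\Oh$ converge in variation to an open convex set $\O\subset Q$, so that $\chi_{\Oh}\to\chi_\O$ in $L^1(Q)$ and the symmetric differences $\Oh\bigtriangleup\O$ are eventually contained in arbitrarily thin neighborhoods of $\p\O$ (Lemma~4.2 of~\cite{ButtazzoGuasoni1997}); moreover the trivial extensions $\tu_h$ of the discrete states are bounded in $H^1_0(Q)$ by $\|\nabla\tu_h\|_{L^2(Q)}\le c_P\|f\|_{L^2(Q)}$, and we may assume $\tu_h\weakly\tu$ in $H^1_0(Q)$; put $u:=\tu|_\O$.

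First I would verify that $(\O,u)$ is admissible for~\eqref{eq:prob}, following part~(ii) of the proof of \cref{prop:existence}: for $\phi\in C^1_c(\O)$ one has $\phi\in C^1_c(\Oh)$ for small $h$, so testing the discrete Poisson problem with the nodal interpolant $\cI_h\phi\in\cS^1_0(\cT_h)$ and using $\nabla\cI_h\phi\to\nabla\phi$ in $L^2$ yields $\int_\O\nabla u\cdot\nabla\phi\,\dx=\int_\O f\phi\,\dx$, while $u\in H^1_0(\O)$ follows since $\tu$ vanishes on every compact subset of $Q\setminus\overline\O$. Testing the discrete Poisson problem with $u_h$ gives $\|\nabla\tu_h\|_{L^2(Q)}^2=\int_Q f\tu_h\,\dx\to\int_Q f\tu\,\dx=\|\nabla\tu\|_{L^2(Q)}^2$, hence $\tu_h\to\tu$ strongly in $H^1_0(Q)$; together with $\chi_{\Oh}\to\chi_\O$ in $L^1(Q)$ and the growth bound on $j$, the $\gamma$-continuity argument (cf.~\cite[p.~213]{AttouchButtazzoMichaille2014}) gives, along a further subsequence, $J(\Oh,u_h)\to J(\O,u)$, i.e.\ $m_h\to J(\O,u)$. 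In particular $J(\O,u)\ge m$, as $(\O,u)$ is admissible.

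It remains to establish $J(\O,u)\le m$, which is where \cref{prop:consistency} enters, preceded by a density reduction. First, $m$ equals the infimum of $J(\O^*,u^*)$ over admissible pairs whose domain is a convex polygon: given an optimal pair $(\O^*,u^*)$, approximate $\O^*$ from inside by convex polygons $\O^*_k$ with $|\O^*\setminus\O^*_k|\to0$; then $\chi_{\O^*_k}\to\chi_{\O^*}$ in $L^1(Q)$, the associated trivially extended Poisson states converge strongly in $H^1_0(Q)$ by the argument of the proof of \cref{prop:existence}, and hence $J(\O^*_k,u^*_k)\to J(\O^*,u^*)$ by $\gamma$-continuity. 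For a convex polygon the boundary is piecewise affine and the Poisson state lies in $H^2$; since in $d=2$ such a domain can moreover be triangulated without any geometric error (so that the symmetric-difference estimate of \cref{prop:consistency} becomes trivial), the finite element states converge to $u^*$ in $H^1$ by standard theory, with no regularity of $u^*$ beyond $H^1_0$ being needed. Combined with the Lipschitz estimate on $J$, this provides, for each such $\O^*$, admissible tuples for~\eqref{eq:prob_2_discr} whose objective values converge to $J(\O^*,u^*)$; hence $m_h\le J(\O^*,u^*)+o(1)$, so $\limsup_h m_h\le m$, and therefore $J(\O,u)=\lim_h m_h\le m$. Consequently $J(\O,u)=m$ and $(\O,u)$ solves~\eqref{eq:prob}.

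The optimality step is the main obstacle: showing the accumulation pair admissible is essentially a copy of \cref{prop:existence}, but certifying it as a global minimizer requires comparison with every competitor, whereas \cref{prop:consistency} applies only to sufficiently regular ones; the density reduction to convex polygons---together with the observation that in $d=2$ such polygons carry the needed approximation property with no geometric error, so that the $W^{1,\infty}$-type regularity from \cref{prop:consistency} plays no role here---is thus the crucial point. Passing to the limit in the nonlinear objective, the other potentially delicate step, is already covered by the $\gamma$-continuity machinery of the proof of \cref{prop:existence}, once the strong $H^1$-convergence of the discrete states has been established.
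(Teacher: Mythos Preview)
Your argument is correct and follows the same three–step architecture as the paper's proof: compactness of the convex domains and weak $H^1$ compactness of the extended discrete states; admissibility of the limit by testing the discrete Poisson problem with nodal interpolants of compactly supported smooth functions and upgrading to strong $H^1$ convergence; and finally optimality by comparison with approximations of a continuous minimizer. The only substantive difference is in this last step. The paper approximates the optimal $\O^*$ from inside by \emph{smooth} convex subdomains and simultaneously regularizes $f$, so that the corresponding states enjoy the $H^2\cap W^{1,\infty}$ regularity required to invoke \cref{prop:consistency} verbatim. You instead approximate $\O^*$ by convex \emph{polygons}, observe that in $d=2$ these are triangulated with zero geometric error, and rely on Céa's lemma plus density of $\cS^1_0$ in $H^1_0$ for the FE convergence, so that only the Lipschitz hypothesis on $J$ from \cref{prop:consistency} (not its full conclusion) is used and no regularization of $f$ is needed. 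Your route is a little more elementary and avoids the auxiliary smoothing of $f$; the paper's route has the advantage of appealing to \cref{prop:consistency} exactly as stated. One small correction: take $\phi\in C^\infty_c(\O)$ rather than $C^1_c(\O)$, since the standard nodal–interpolation estimate $\|\phi-\cI_h\phi\|_{H^1}\to 0$ wants $H^2$ regularity of $\phi$.
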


\begin{proof}
We argue as in the proof of \cref{prop:existence}. Accumulation
points $\O$ of the sequence of convex sets $(\Oh)$ are convex and the finite
element solutions are bounded in $H^1_0(Q)$ with weak accumulation points $u\in H^1_0(\O)$.
To show that every such point solves the Poisson problem in $\O$ we choose a
smooth, compactly supported function $\phi\in C^\infty_c(\O)$ and note that
by the uniform convergence $\Oh\bigtriangleup \O \to 0$ stated in~\eqref{eq:uniform_conv_sets}
we have for $h$ sufficiently small that the nodal interpolant 
$\phi_h = \cI_h \phi \in \cS^1(\cT_h)$ satisfies $\phi_h \in H^1_0(\O_h)$.
Moreover, we deduce from nodal interpolation results and the uniform control 
on the shape of the elements that (after trivial extension to $Q$) we have
$\phi_h \to \phi$ in $H^1(Q)$. These properties lead to the relation
\[
\int_Q \nabla u \cdot \nabla \phi \,\dx 
= \lim_{h\to 0} \int_Q \nabla u_h \cdot \nabla \phi_h \,\dx 
= \lim_{h\to 0} \int_Q  f \phi_h \, \dx
= \int_Q f \phi \, \dx.
\]
Arguing \sbb{as in the proof of Proposition~\ref{prop:existence}}
it follows that the finite element solutions converge strongly in $H^1_0(Q)$.
\sbb{With the results from \cite[p.~213]{AttouchButtazzoMichaille2014}}
the conditions on $j$ yield that
\[
J(\O,u) \le \liminf_{h\to 0} J(\Oh,u_h).
\]
\sbb{It remains to show that the pair $(\O,u)$ is optimal. For this, we note
that for an optimal pair $(\O^*,u^*)$ the convex set $\O^*$ can be approximated 
by smooth convex domains contained in $\O^*$ and if also the right-hand side $f$ is 
regularized then the corresponding smooth solutions of the Poisson problems
converge strongly in $H^1$ to $u^*$, see~\cite[p. 141]{Dobro06} for related details. 
Hence,~\cref{prop:consistency}
implies that the limit of the sequence $J(\Oh,u_h)$ is bounded
from above by the optimal continuous value.}
Since $(\O,u)$ is admissible we deduce that it solves~\eqref{eq:prob}. 
\end{proof}

\subsection{Deformed triangulations}
The discrete formulation~\eqref{eq:prob_2_discr}
is of limited practical interest. Under a regularity assumption on a solution 
of the continuous formulation~\eqref{eq:prob} we devise a convergent
discrete scheme that admits a solution. The admissible discrete domains are 
obtained from certain discrete deformations of a given convex reference domain $\hO$:

\begin{equation}
	\label{eq:prob_2b_discr}
	\tag{$\textbf{P}_h'$}
	\begin{aligned}
		\text{Minimize}\quad & \int_\Oh j(x, u_h(x), \nabla u_h(x)) \,\dx \\
		\text{w.r.t.}\quad & \Phi_h \in \cS^1(\hcT_h)^d, \, 
                      \cT_h \text{ triangulation of } \Oh, \, u_h \in \cS^1_0(\cT_h) \\
		\text{s.t.}\quad
                & \|D\Phi_h\|_{L^\infty(\hO)} + \|[D\Phi_h]^{-1} \|_{L^\infty(\hO)} \le c_0 \\
                & \Oh = \Phi_h(\hO) \subset Q \text{ convex and open}, \, \cT_h = \Phi_h(\hcT_h) \\
		& -\Delta_h u_h = f_h \text{ in }\Oh, u_h = 0 \text{ on }\p\Oh 
	\end{aligned}
\end{equation}
Here, $\hcT_h$ is a fixed regular triangulation of the convex reference domain $\hO$. 
The notation $\Phi_h(\hcT_h)$ represents the triangulation $\cT_h$
of $\Oh$ that is obtained by applying $\Phi_h$ to the elements in $\hcT_h$. 

\begin{proposition}
Assume that $(\O,u)$ is a solution for~\eqref{eq:prob} such that $\O = \Phi(\hO)$ 
with an injective deformation $\Phi\in W^{1,\infty}(\hO;\R^d)$, satisfying 
\[
\|D\Phi\|_{L^\infty(\hO)} + \|[D\Phi]^{-1}\|_{L^\infty(\hO)} \le c_0'
\]
and $\Phi|_T \in W^{2,\infty}(T;\R^d)$ with $\|D^2\Phi\|_{L^\infty(T)}\le c_0''$ 
for all $T\in \hcT_h$.  For $h$ sufficiently small and
$c_0$ sufficiently large we have the following results: \\
(i) There exists a discrete solution $(\Oh,u_h)$ for~\eqref{eq:prob_2b_discr}. \\
(ii) If $d=2$ there exists an admissible pair $(\Oh,u_h)$ obeying
the bounds of \cref{prop:consistency}. \\
(iii) If $d=2$ and $(\O_h,u_h)_{h>0}$ is a sequence of discrete solutions then every
accumulation point for $h\to 0$ solves the continuous formulation.
\end{proposition}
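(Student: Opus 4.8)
The plan is to treat the three assertions essentially as corollaries of the analysis already carried out for \eqref{eq:prob_2_discr} in \cref{prop:consistency} and \cref{prop:stability}, with the deformation structure $\Oh = \Phi_h(\hO)$ playing the role of the main new bookkeeping device. First I would fix the constants: choose $c_0 > c_0'$ so that the continuous deformation $\Phi$ from the hypothesis will be admissible (after a discrete approximation) for \eqref{eq:prob_2b_discr}, and keep $c_0''$ only to control the second-order interpolation error of $\Phi$ on each reference element $T \in \hcT_h$.

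For (i), the strategy is a direct-method argument on the finite-dimensional feasible set. I would take an infimizing sequence of admissible tuples $(\Phi_h^{(k)}, \cT_h, u_h^{(k)})_k$ for fixed $h$. The deformations $\Phi_h^{(k)} \in \cS^1(\hcT_h)^d$ lie in a fixed finite-dimensional space, and the uniform bound $\|D\Phi_h^{(k)}\|_{L^\infty(\hO)} + \|[D\Phi_h^{(k)}]^{-1}\|_{L^\infty(\hO)} \le c_0$ confines them to a compact subset of that space (the invertibility bound is a closed condition once a normalization, e.g.\ fixing one vertex, is imposed; translations can be quotiented out or pinned by $\Oh \subset Q$). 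Passing to a subsequence, $\Phi_h^{(k)} \to \Phi_h$ with $\Phi_h$ still satisfying the bound and still having $\Phi_h(\hO)$ convex and open (convexity of the image polygon is preserved under the limit because the vertex configuration converges and the limit remains nondegenerate). The discrete states $u_h^{(k)} \in \cS^1_0(\cT_h^{(k)})$ can be pulled back to the reference triangulation $\hcT_h$ via $\Phi_h^{(k)}$; there they form a bounded sequence in the fixed finite-dimensional space $\cS^1_0(\hcT_h)$, so they converge, and pushing forward by $\Phi_h$ gives the limiting discrete state solving $-\Delta_h u_h = f_h$ on $\Oh$. Continuity of $j$ and of the change-of-variables Jacobians then yields lower semicontinuity of the objective, so the limit is a minimizer.

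For (ii), I would simply invoke \cref{prop:consistency}: the hypotheses there require $\O$ convex with piecewise $C^2$ boundary and $u \in H^2(\O) \cap W^{1,\infty}(\O)$, and these follow from the present assumptions — $\O = \Phi(\hO)$ with $\Phi$ piecewise $W^{2,\infty}$ and bi-Lipschitz makes $\p\O$ piecewise $C^{1,1}$ (hence piecewise $C^2$ in the relevant sense), and the Poisson solution on such a domain enjoys the stated regularity. Moreover the interpolant $\Phi_h := \cI_h \Phi \in \cS^1(\hcT_h)^d$ is, for $h$ small, admissible for \eqref{eq:prob_2b_discr} with the constant $c_0$: the first-order interpolation estimate $\|D(\Phi - \Phi_h)\|_{L^\infty(T)} \le c\, h\, \|D^2\Phi\|_{L^\infty(T)} \le c\,c_0''\,h$ keeps both $\|D\Phi_h\|_{L^\infty}$ and $\|[D\Phi_h]^{-1}\|_{L^\infty}$ within $c_0$, and $\Oh = \Phi_h(\hO)$ is a convex open polygon contained in (a neighborhood of) $\O$. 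Thus the pair produced by \cref{prop:consistency}, realized through this $\Phi_h$, gives the asserted $O(h^2)$ symmetric-difference bound and $O(h)$ energy-error bound.

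For (iii), the argument is that of \cref{prop:stability}, restricted to the subclass of domains admitted by \eqref{eq:prob_2b_discr}. Given a sequence of discrete minimizers $(\Oh, u_h)$, the uniform bi-Lipschitz bound on $\Phi_h$ gives, after passing to a subsequence, $\Phi_h \to \Phi$ uniformly with $\Phi$ bi-Lipschitz satisfying the same bound (Arzelà–Ascoli on the uniformly Lipschitz maps, with the lower bound on $[D\Phi_h]^{-1}$ passing to the limit); hence $\Oh = \Phi_h(\hO) \to \Phi(\hO) =: \O$ in the sense of characteristic functions, $\O$ is convex and open, and \eqref{eq:uniform_conv_sets} holds along the sequence. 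The state convergence $u_h \weakly u$ in $H^1_0(Q)$, the identification of $u$ as the weak solution on $\O$ via nodal interpolants of test functions, the strong $H^1_0(Q)$ convergence via the energy identity, and the $\gamma$-lower-semicontinuity of $J$ all go through verbatim as in \cref{prop:stability}. Finally, optimality of $(\O,u)$ among \emph{all} admissible pairs for \eqref{eq:prob} follows because the class of deformation-images of $\hO$ with bi-Lipschitz constant $\le c_0'$ is dense, in the relevant convergence, among convex subdomains — combined with the recovery-sequence bound from part (ii), which shows $\limsup_h J(\Oh, u_h)$ does not exceed the optimal continuous value. The main obstacle here, and the place I would be most careful, is verifying that passing to the limit preserves both convexity \emph{and} the lower bound on $[D\Phi_h]^{-1}$ simultaneously (i.e.\ that the limiting deformation is genuinely bi-Lipschitz and its image genuinely open), since a naive uniform limit of Lipschitz maps could degenerate; one handles this by noting the lower bound on the inverse gives a uniform modulus of injectivity that survives the limit.
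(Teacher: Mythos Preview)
Your proposal is correct and follows essentially the same route as the paper: a finite-dimensional direct-method argument for (i), the nodal interpolant $\Phi_h = \hcI_h\Phi$ together with \cref{prop:consistency} for (ii), and the stability argument of \cref{prop:stability} combined with the recovery sequence from (ii) for (iii). One point to clean up in (iii): the density claim---that bi-Lipschitz images of $\hO$ with fixed constant $c_0'$ are dense among \emph{all} convex subdomains of $Q$---is both unjustified (it fails for thin or degenerate convex sets) and unnecessary; the recovery sequence from (ii), which you also invoke, already yields $\limsup_h J(\Oh,u_h) \le J(\O,u)$ because the proposition \emph{assumes} $(\O,u)$ is optimal for \eqref{eq:prob}, and this together with admissibility of the accumulation point is all that is needed. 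The paper's proof simply drops the density step and appeals directly to (ii).
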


\begin{proof}
(i) The existence of a solution $(\Phi_h,\O_h,u_h)$ follows from continuity 
properties of the objective function and the boundedness of admissible elements. \\
(ii) We define $\Phi_h = \hcI_h \Phi$ as the nodal interpolant of $\Phi$ on 
$\hO$. As $\|D\Phi_h -D\Phi\|_{L^\infty(T)} \le c h \|D^2\Phi\|_{L^\infty(T)}$ for all
$T\in \hcT_h$ we find that $\Phi_h$ is admissible in~\eqref{eq:prob_2b_discr}. 
Then, $\O_h = \Phi_h (\hO)$ is a polygonal domain whose boundary interpolates 
the boundary of $\O$ so that it is convex (in the considered two-dimensional situation).
The finite element solution $u_h$ on $\O_h$ then approximates the exact
solution $u\in H^1_0(\O)\cap H^2(\O)$ of the Poisson problem on the convex domain~$\O$. \\
(iii) For a sequence of discrete solutions it follows as in \cref{prop:stability}
that accumulation points $(\O,u)$ are admissible in~\eqref{eq:prob}. By the
approximation result~(ii) it follows that these are optimal. 
\end{proof}

\begin{remark}
As above, an interpolating polygonal domain of a convex domain is in general not
convex if $d=3$ so that the proposition cannot be directly generalized to that setting.
\end{remark}

The elementwise regularity assumption on a solution $\Phi$ cannot be avoided in general. 

\begin{example}[Non-invertibility of the interpolant]
Let $\Phi:B_{1,\pi/2}(0) \to B_{1,\pi}(0)$ the mapping that maps the quarter-disk with 
radius~1 to the half-disk with radius~1 by doubling the angle of every point in its polar 
coordinates, i.e., $\Phi(r,\phi) = (r,2\phi)$ for $0<r<1$ and $0<\phi<\pi/2$.
Then $\Phi$ and $\Phi^{-1}$ are Lipschitz continuous. For every $0<h\le 1$ 
the vertices of the triangle $T = \operatorname{conv}\{(0,0),(h,0),(0,h)\}$ are mapped onto the line segment 
$[-h,h]\times \{0\}$. Hence, the interpolant $\Phi_h = \cI_h \Phi$ cannot be a diffeomorphism
on $T$. 
\end{example}

\section{Numerical realization}
\label{sec:numerics}

In this section, we describe a possibility to solve
a slight variation concerning the treatment of the bounds on the diffeomorphism $\Phi_h$
(see \cref{subsec:restricted_gradient} for details) of 
the discretization~\eqref{eq:prob_2b_discr}
of \eqref{eq:prob}.
For this, let $\Omega_h$ be a convex polygon together with a 
regular triangulation $\cT_h$. We introduce the discrete shape functional $J_h$ via
\begin{equation}
	\label{eq:discrete_shape_functional}
	J_h(\Omega_h)
	:=
	\int_{\Omega_h} j(x, u_h(x), \nabla u_h(x)) \, \dx,
\end{equation}
where $u_h \in \cS^1_0(\cT_h)$ is the solution of the
discretized state equation
\begin{equation*}
	\int_\Oh \nabla u_h \cdot \nabla v_h \,\dx = \int_\Oh f \, v_h \, \dx
	\qquad\forall v_h \in \cS^1_0(\cT_h).
\end{equation*}
In fact, $J_h$ depends not only on $\Omega_h$
but also on the underlying triangulation $\cT_h$. However, this is dependence is 
not explicitly mentioned for ease of the presentation.

We are going to optimize this domain
by moving the vertices in the triangulation $\cT_h$.
We will see that this is consistent with a discrete version
of the perturbation of identity.
Indeed, if
$V_h \in \cS^1(\cT_h)^d$
is a piecewise linear deformation field,
then
$T_t := I + t \, V_h$
describes a piecewise linear perturbation
leading to the deformed triangulation
$T_t(\cT_h)$,
in which the position of each vertex $x_i$ is changed to
$x_i + t \, V_h(x_i)$.
Note that $V_h(x_i)$ are precisely the degrees of freedom of the
finite element function $V_h$.
Moreover, this deformation has the important property
\begin{equation*}
	v_h \circ T_t^{-1} \in \cS^1_0(T_t(\cT_h))
	\quad\Longleftrightarrow\quad
	v_h \in \cS^1_0(\cT_h)
\end{equation*}
for all functions $v_h : \Omega \to \R$.

Due to this property, it is possible to derive a
shape derivative for the discrete functional
$J_h$
along the same lines as used in \cref{sec:prelim}.
This leads to the expression
\begin{equation}
	\label{eq:discrete_shape_derivative}
	\begin{aligned}
	J_h'(\Omega_h; V_h)
	=
	\int_\Oh
	&j_x(\cdot) \cdot V_h
	-
	j_v(\cdot) \cdot DV_h^\top \nabla u_h
	+
	\nabla p_h^\top \bigl[-DV_h - DV_h^\top + \div(V_h) \,I\bigr]\nabla u_h
	\\
	&\qquad
	-
	\div(f \, V_h) \, p_h
	+
	j(\cdot) \, \div(V_h)
	\,\dx
	\end{aligned}
\end{equation}
for all $V_h \in \cS^1(\cT_h)^d$,
where $p_h \in \cS^1_0(\cT_h)$ is the discrete adjoint state
which solves
\begin{equation}
	\label{eq:discrete_adjoint_equation}
	\int_\Oh \nabla p_h\cdot \nabla w_h \, \dx
	=
	\int_\Oh j_u(\cdot) \, w_h + j_v(\cdot) \cdot \nabla w_h \, \dx
	\qquad\forall w_h \in \cS^1_0(\cT_h).
\end{equation}
Analogously to \cref{sec:prelim},
we used $(\cdot)$ to abbreviate the argument $(x, u_h(x), \nabla u_h(x))$.

In order to formulate an implementable algorithm,
we have to compute a ``good'' representing deformation field $V_h$
from the shape derivative $J_h'(\Omega_h;\cdot)$.
Consequently, this deformation field is used to update
the domain $\Omega_h$
via $(I + t\,V_h)(\Omega_h)$,
where $t$ is a suitable step size.
For the calculation of $V_h$,
three important points are to be considered
and these will be described in the next three sections:
\begin{enumerate}
	\item
		The shape derivative $J_h'(\Omega_h;\cdot)$
		is an element of the dual space of $\cS^1(\cT_h)^d$
		and has to be represented by 
		an element of $\cS^1(\cT_h)^d$ in an appropriate inner product.
		This will be considered in \cref{subsec:shape_gradient}.
	\item
		At some point in our algorithm,
		we have to respect the convexity constraint
		which is posed in the problem under consideration.
		We will use deformation fields
		which are (in a certain sense)
		first-order feasible, see \cref{subsec:feasible_def_fields}.
	\item
		We have seen in \cref{sec:prelim},
		that (in certain situations), $J'(\Omega;V)$
		only depends on the normal trace of $V$.
		This is no longer the case for $J'_h(\Oh;V_h)$,
		since the functional $J_h(\Oh)$
		also depends on the location of the inner nodes
		of the triangulation $\cT_h$.
		This problem and a possible resort
		are addressed in \cref{subsec:restricted_gradient}.
\end{enumerate}
With these preparations, we comment on a possible line-search strategy
(\cref{subsec:line_search}) and
state an implementable algorithm (\cref{subsec:algorithm}).

\subsection{Computation of a shape gradient}
\label{subsec:shape_gradient}
There are many possibilities to compute a deformation field $V_h$
from the linear functional $J_h'(\Oh;\cdot)$.
We follow the approach proposed in \cite[Section~3]{SchulzSiebenbornWelker2016}.
To this end,
we introduce the elasticity bilinear form
\begin{equation*}
	\cE_h(V_h, W_h)
	:=
	\int_\Oh
	2 \, \mu \, \bvarepsilon(V_h) \dprod \bvarepsilon(W_h)
	+
	\lambda \, \trace(\bvarepsilon(V_h)) \, \trace(\bvarepsilon(W_h))
	+
	\delta \, V_h \cdot W_h
	\,
	\dx
\end{equation*}
for $V_h, W_h \in \cS^1(\cT_h)^d$.
Here,
$\bvarepsilon(V_h) := (DV_h + DV_h^\top)/2$
is the linearized strain tensor.
Moreover,
$\mu, \lambda > 0$ are the Lamé parameters
and $\delta > 0$
is a damping parameter
such that $\cE_h$ becomes coercive on $\cS^1(\cT_h)^d$.

Now, one possibility to compute a deformation field $V_h$
is to solve
\begin{equation}
	\label{eq:discrete_riesz_representation_1}
	\cE_h(V_h, W_h)
	=
	-J'_h(\Omega_h; W_h)
	\qquad\forall W_h \in \cS^1(\cT_h)^d.
\end{equation}
Note that this is equivalent to solving the following minimization problem:
\begin{equation}
	\label{eq:discrete_riesz_representation_2}
	\begin{aligned}
		\text{Minimize}\quad& 
		\frac12\cE_h(V_h, V_h)
		+
		J'_h(\Omega_h; V_h)
		\\
		\text{w.r.t.}\quad&
		V_h \in \cS^1(\cT_h)^d
		.
	\end{aligned}
\end{equation}

\subsection{Feasible deformation fields}
\label{subsec:feasible_def_fields}
Using the deformation field $V_h$ from \eqref{eq:discrete_riesz_representation_1}
for deforming the domain $\Omega_h$ could lead to non-convex domains.
We incorporate the convexity constraint in such a way that the deformation field
respects the convexity constraint to first order.
We focus on the case of dimension $d = 2$ and briefly outline the case $d = 3$.

Let  $\Omega_h \subset \R^2$ be a simply connected polygon
and let $N$ be the number of boundary vertices of $\Omega_h$
with coordinates $x^{(i)} \in \R^2$, $i = 1,\ldots, N$,
in counterclockwise order. It is easily seen that $\Omega_h$ is convex
if and only if all the interior angles are less than or equal to $\pi$.
By using the cross product, this, in turn, is equivalent to
\begin{equation}
	\label{eq:discrete_convexity_2d}
	C_i(X)
	:=
	\bigl(x^{(i-1)}_1 - x^{(i)}_1\bigr)
	\,
	\bigl(x^{(i+1)}_2 - x^{(i)}_2\bigr)
	-
	\bigl(x^{(i-1)}_2 - x^{(i)}_2\bigr)
	\,
	\bigl(x^{(i+1)}_1 - x^{(i)}_1\bigr)
	\le
	0,
\end{equation}
for $i=1,\dots,N$, 
where we used the conventions $x^{(0)} = x^{(N)}$ and $x^{(N+1)} = x^{(1)}$.
Moreover, the argument $X$ represents the vector $(x^{(1)},\ldots,x^{(N)})$.
Likewise, the coordinates of the vertices of the perturbed domain
are given by $x_i + t^0 \, V_h(x_i)$,
where $t^0$ is an initial step size.
Using these functions $C_i$, the convexity of the deformed domain
$(I + t^0\,V_h)(\Omega_h)$ is equivalent to
$C_i(X + t^0 \, V_h(X)) \le 0$ for all $i = 1,\ldots,N$.
We are going to use a first-order expansion of this
quadratic constraint and obtain
\begin{equation*}
	C_i(X + t^0 \, V_h(X))
	\approx
	C_i(X) + t^0 \, D C_i(X) \, V_h(X)
	\stackrel{!}{\le}0,
	\quad \forall i = 1,\ldots,N.
\end{equation*}
Therefore, we replace \eqref{eq:discrete_riesz_representation_2} by
the constrained problem:
\begin{equation}
	\label{eq:discrete_riesz_representation_3}
	\begin{aligned}
		\text{Minimize}\quad& 
		\frac12\cE_h(V_h, V_h)
		+
		J'_h(\Omega_h; V_h)
		\\
		\text{w.r.t.}\quad&
		V_h \in \cS^1(\cT_h)^d,
		\\
		\text{s.t.}\quad&
		C_i(X) + t^0 \, D C_i(X) \, V_h(X)
		\le 0,
		\quad \forall i = 1,\ldots,N
	\end{aligned}
\end{equation}
We remark that this is a convex quadratic program (QP).

We briefly comment on the three-dimensional situation.
Similar to the two-dimensional situation,
we consider a $2$-connected polyhedron $\Omega_h$.
Now, one can check that $\Omega_h$ is convex,
if each outer edge of $\Omega_h$ is convex in the sense that the
dihedral angle between the two adjacent faces
is less than or equal to $\pi$.
We show that this can be written as a system of polynomial inequalities
which is cubic w.r.t.\ the coordinates of the vertices of $\cT_h$.
To this end, we take an arbitrary outer edge
with vertices $x^{(i)}$ and $x^{(j)}$.
Relative to the vector from $x^{(i)}$ to $x^{(j)}$
we denote the third vertex of the left and right triangle
by $x^{(l)}$ and $x^{(r)}$, respectively.
The convexity of the edge can be characterized by the non-negativity of the signed volume
of the parallelepiped spanned by the vectors
$x^{(l)} - x^{(i)}$, $x^{(1)} - x^{(i)}$, $x^{(r)} - x^{(i)}$, i.e.,
\begin{equation*}
	\bigl( x^{(l)} - x^{(i)} \bigr)
	\cdot
	\Bigl(
	\bigl( x^{(j)} - x^{(i)} \bigr)
	\times
	\bigl( x^{(r)} - x^{(i)} \bigr)
	\Bigr)
	\ge
	0
\end{equation*}
Now, a QP similar to \eqref{eq:discrete_riesz_representation_3}
can be constructed analogously to the two-dimensional case.

\subsection{Avoiding spurious interior deformations}
\label{subsec:restricted_gradient}
We have already mentioned that the value of $J_h(\Omega_h)$
also depends on the positions of the interior nodes of the triangulation $\cT_h$,
since the discrete state $u_h$ depends on all the nodes of $\cT_h$.
Therefore,
using $V_h$ governed by \eqref{eq:discrete_riesz_representation_2}
or \eqref{eq:discrete_riesz_representation_3}
would result also in an optimization of the nodes of the triangulation.
In a more extreme case,
we could even fix all the boundary nodes of the triangulation $\cT_h$
in order to optimize the location of the interior nodes
(this would result in zero Dirichlet boundary conditions for $V_h$
in \eqref{eq:discrete_riesz_representation_2} or \eqref{eq:discrete_riesz_representation_3}).
This may lead to degenerate triangulations. 

	In the discrete problem \eqref{eq:prob_2b_discr},
	this degeneracy of the triangulations
	was avoided by the bounds on $D\phi_h$
	and $[D\phi_h]^{-1}$.
	In the numerical implementation,
	the realization of these constraints is rather cumbersome
	and it is also not clear how the constant $c_0$
	should be chosen.
	Therefore,
	we use a
possible resort which is proposed in the recent preprint
\cite{EtlingHerzogLoayzaWachsmuth2018}.
Therein,
the authors suggest to restrict the set of admissible
deformation fields in \eqref{eq:discrete_riesz_representation_2}.
Motivated by the continuous situation
in which $J'(\Omega; V)$ only acts on the normal trace of $V$
(due to Hadamard's structure theorem),
it is reasonable to only consider those deformation fields $V_h$
which result from a normal force.
To this end, we introduce the operator
$N_h : \cS^1(\partial\cT_h) \to (\cS^1(\cT_h)^d)^\star$
by requiring that for all $F_h \in \cS^1(\partial\cT_h)$ and $V_h \in \cS^1(\cT_h)^d$
we have
\begin{equation*}
	\dual{N_h \, F_h}{V_h}
	:=
	\int_{\partial\Oh} F_h \, (V_h \cdot n) \, \ds,
\end{equation*}
where $\cS^1(\partial\cT_h)$
are the piecewise linear and continuous functions
on the boundary $\partial\Oh$.
Now, a deformation field $V_h$ results from a normal 
force $F_h \in \cS^1(\partial\cT_h)$, if
\begin{equation*}
	\cE_h(V_h, W_h) = 
	\dual{N_h \, F_h}{W_h}
	\qquad\forall W_h \in \cS^1(\cT_h)^d.
\end{equation*}
Following this approach,
we replace \eqref{eq:discrete_riesz_representation_3} by the following minimization 
problem: 
\begin{equation}
	\label{eq:discrete_riesz_representation_4}
	\begin{aligned}
		\text{Minimize}\quad& 
		\frac12\cE_h(V_h, V_h)
		+
		J'_h(\Omega_h; V_h)
		\\
		\text{w.r.t}\quad&
		V_h \in \cS^1(\cT_h)^d,
		F_h \in \cS^1(\partial\cT_h),
		\\
		\text{s.t.}\quad&
		C_i(X) + t^0 \, D C_i(X) \, V_h
		\le 0,
		\quad \forall i = 1,\ldots,N
		,
		\\
	        &
		\cE_h(V_h, W_h) = 
		\dual{N_h \, F_h}{W_h}
		\quad \forall W_h \in \cS^1(\cT_h)^d
	\end{aligned}
\end{equation}
Again, problem \eqref{eq:discrete_riesz_representation_4} is a convex QP. The
problem is feasible if, e.g., the current domain $\O_h$ is convex. Otherwise, 
$\O_h$ can be replaced by its convex hull. 

\subsection{Armijo-like line search with merit function}
\label{subsec:line_search}
As explained above, we use the solution
$V_h$ of the convex QP \eqref{eq:discrete_riesz_representation_4}
as a search direction using the perturbation of identity approach.
That is,
the next iterate is given by
$\Omega_{h,t} := (I + t \, V_h)(\Omega_h)$,
where $t > 0$ is a suitable step size.
Since we consider a constrained problem,
we use a merit function for the determination
of suitable step sizes.
To this end,
we consider the merit function
\begin{equation*}
	\varphi(t)
	:=
	J_h(\Omega_{h,t})
	+
	M \, \sum_{i = 1}^N \bigl[ C_i(X + t\,V_h(X)) \bigr]^+
\end{equation*}
for some suitable parameter $M > 0$.
It is easy to check that the (directional) derivative 
$\varphi'(0) = \lim_{t \searrow 0} (\varphi(t)-\varphi(0))/t$ is given by
\begin{equation*}
	\varphi'(0)
	=
	J_h'(\Omega_h; V_h)
	+
	M \, \sum_{i:C_i(X) = 0} \bigl[ DC_i(X) \, V_h(X) \bigr]^+
	+
	M \, \sum_{i:C_i(X) > 0} DC_i(X) \, V_h(X)
	.
\end{equation*}
Moreover, if $V_h$ is a feasible point of \eqref{eq:discrete_riesz_representation_4},
the middle term vanishes due to the constraint in \eqref{eq:discrete_riesz_representation_4}.
Thus,
\begin{equation}
	\label{eq:derivative_merit_function}
	\varphi'(0)
	=
	J_h'(\Omega_h; V_h)
	+
	M \, \sum_{i:C_i(X) > 0} DC_i(X) \, V_h(X)
	.
\end{equation}
Next, we obtain the result that the solution of \eqref{eq:discrete_riesz_representation_4}
is a descent direction for the merit function $\varphi$,
if $M$ is chosen large enough.
Such a result is well known for the usage of merit functions in other optimization methods,
e.g., within the SQP method.
\begin{lemma}
	\label{lem:descent_merit_function}
	Let $V_h \ne 0$ be the solution of \eqref{eq:discrete_riesz_representation_4}.
	Then, $\varphi'(0) < 0$
	if
	$M \ge t_0 \, \sup_{i = 1,\ldots,N} \lambda_i$,
	where $\lambda \in \R^N$ is a Lagrange multiplier for $V_h$
	associated with the linearized convexity constraint in \eqref{eq:discrete_riesz_representation_4}.
\end{lemma}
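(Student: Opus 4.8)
The plan is to derive the first-order optimality (KKT) system of the convex quadratic program~\eqref{eq:discrete_riesz_representation_4} and substitute it into the formula~\eqref{eq:derivative_merit_function} for $\varphi'(0)$. Since~\eqref{eq:discrete_riesz_representation_4} has a strongly convex quadratic objective and only affine constraints, a KKT point exists without any constraint qualification. I would introduce a multiplier $\lambda\in\R^N$, $\lambda\ge 0$, for the linearized convexity inequalities and a multiplier $Z_h\in\cS^1(\cT_h)^d$ for the normal-force equality $\cE_h(V_h,\cdot)=\dual{N_h F_h}{\cdot}$. Stationarity in $V_h$ then reads
\begin{equation*}
	\cE_h(V_h,W_h)+J_h'(\Omega_h;W_h)+t^0\sum_{i=1}^N\lambda_i\,DC_i(X)\,W_h+\cE_h(W_h,Z_h)=0
	\qquad\forall W_h\in\cS^1(\cT_h)^d,
\end{equation*}
stationarity in $F_h$ gives $\dual{N_h G}{Z_h}=0$ for all $G\in\cS^1(\partial\cT_h)$, and we also have primal feasibility together with complementary slackness $\lambda_i\,(C_i(X)+t^0\,DC_i(X)\,V_h)=0$.

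The first genuine step is to eliminate the equality multiplier $Z_h$. Testing the primal equality constraint with $W_h=Z_h$ gives $\cE_h(V_h,Z_h)=\dual{N_h F_h}{Z_h}$, and by $F_h$-stationarity the right-hand side vanishes, so $\cE_h(V_h,Z_h)=0$. Choosing $W_h=V_h$ in the stationarity identity (legitimate, since $V_h$ is feasible) and using $\cE_h(V_h,Z_h)=0$, I obtain $\cE_h(V_h,V_h)+J_h'(\Omega_h;V_h)+t^0\sum_i\lambda_i\,DC_i(X)\,V_h=0$; complementary slackness turns $t^0\lambda_i\,DC_i(X)\,V_h$ into $-\lambda_i\,C_i(X)$, so that $J_h'(\Omega_h;V_h)=-\cE_h(V_h,V_h)+\sum_{i=1}^N\lambda_i\,C_i(X)$.

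It then remains to insert this into~\eqref{eq:derivative_merit_function} and estimate. Since $\lambda_i\ge 0$, the indices with $C_i(X)\le 0$ only decrease $\sum_i\lambda_i C_i(X)$, so I keep only the indices with $C_i(X)>0$; on those, primal feasibility of $V_h$ yields $DC_i(X)\,V_h\le-C_i(X)/t^0$, which bounds the last sum in~\eqref{eq:derivative_merit_function}. Collecting everything gives
\begin{equation*}
	\varphi'(0)\le-\cE_h(V_h,V_h)+\sum_{i:\,C_i(X)>0}\Bigl(\lambda_i-\frac{M}{t^0}\Bigr)C_i(X),
\end{equation*}
and the hypothesis $M\ge t^0\sup_i\lambda_i$ makes every summand non-positive (here $C_i(X)>0$), so $\varphi'(0)\le-\cE_h(V_h,V_h)$. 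Coercivity of $\cE_h$ on $\cS^1(\cT_h)^d$ together with $V_h\ne 0$ then gives $\varphi'(0)<0$.

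The one slightly delicate point — the rest being the routine exact-penalty bookkeeping familiar from the SQP literature — is the elimination of $Z_h$: one must observe that although the equality constraint couples $V_h$ and $F_h$, its multiplier drops out precisely when the $V_h$-stationarity is tested with the feasible direction $W_h=V_h$. Equivalently, one can avoid $Z_h$ altogether by noting that $V_h$ minimizes the quadratic objective over the affine subspace cut out by the equality constraint and the active inequalities, a subspace containing the ray through $V_h$; differentiating along that ray produces the same identity.
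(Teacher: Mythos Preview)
Your proof is correct and follows essentially the same route as the paper: write the KKT system for \eqref{eq:discrete_riesz_representation_4}, eliminate the equality multiplier via $F_h$-stationarity combined with the primal constraint to obtain $\cE_h(V_h,Z_h)=0$, test the $V_h$-stationarity with $W_h=V_h$, and then compare term by term with \eqref{eq:derivative_merit_function}. The only cosmetic difference is that you first use complementary slackness to rewrite $t^0\sum_i\lambda_i\,DC_i(X)\,V_h$ as $-\sum_i\lambda_i\,C_i(X)$ before splitting on the sign of $C_i(X)$, whereas the paper performs the case split directly on the original sum; the resulting estimate $\varphi'(0)\le-\cE_h(V_h,V_h)$ is the same.
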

Note that all the constraints in \eqref{eq:discrete_riesz_representation_4} are linear.
Therefore the existence of Lagrange multipliers at the solution $V_h$ follows
from standard arguments in optimization.
\begin{proof}
	We start by writing down the optimality conditions for \eqref{eq:discrete_riesz_representation_4}.
        Abbreviating throughout this proof the expression $DC_i(X)V_h(X)$ by $DC_i(X)V_h$, 
	the Lagrange function is given by
	\begin{equation*}
          \begin{split}
		L(V_h, F_h, \lambda, W_h)
		= & 
		\frac12 \cE_h(V_h,V_h)
		+
		J_h'(\Omega_h; V_h) \\
		& +
		\sum_{i = 1}^N \lambda_i \, \bigl( C_i(X) + t^0 \, DC_i(X) \, V_h \bigr)
		+
		\cE_h(V_h, W_h) - \dual{N_h \, F_h}{W_h}.
          \end{split}
	\end{equation*}
	By optimality of $V_h$,
	we obtain the existence of Lagrange multipliers
	$\lambda \in \R^n$, $W_h \in \cS^1(\cT_h)^d$
	such that
	\begin{align*}
		\cE_h(V_h,\delta V_h)
		+
		J_h'(\Omega_h; \delta V_h)
		+
		\sum_{i = 1}^N \lambda_i \, t^0 \, DC_i(X) \, \delta V_h
		+
		\cE_h(\delta V_h, W_h)
		&= 0,\\
		\dual{N_h \, \delta F_h}{W_h}
		&= 0,\\
		0 \le \lambda_i \quad\perp\quad C_i(X) + t^0 \, DC_i(X) \, V_h
		&\le 0,
	\end{align*}
	holds for all
	$\delta V_h \in \cS^1(\cT_h)^d$,
	$\delta F_h \in \cS^1(\partial\cT_h)$
	and $i = 1,\ldots, N$.
	Using $\delta F_h = F_h$ in the second equation
	together with the second constraint in \eqref{eq:discrete_riesz_representation_4},
	we find
	\begin{equation*}
		0
		=
		\dual{N_h \, F_h}{W_h}
		=
		\cE_h(V_h, W_h).
	\end{equation*}
	Together with
	$\delta V_h = V_h$ in the first equation of the optimality conditions,
	we obtain
	\begin{align*}
		0
		&>
		-\cE_h(V_h, V_h)
		=
		-\cE_h(V_h, V_h)
		-\cE_h(V_h, W_h)
		\\
		&=
		J_h'(\Omega_h; V_h)
		+
		\sum_{i = 1}^N \lambda_i \, t^0 \, DC_i(X) \, V_h.
	\end{align*}
	In order to obtain the claim,
	we distinguish two cases.

	Case~1, $C_i(X) \le 0$:
	If $\lambda_i > 0$, then the constraint
	$C_i(X) + t^0 \, DC_i(X)\,V_h \le 0$
	in \eqref{eq:discrete_riesz_representation_4}
	has to be active.
	This gives $DC_i(X)\,V_h \ge 0$.
	Thus, we obtain
	\begin{equation*}
		\lambda_i \, t^0 \, DC_i(X)\,V_h
		\ge
		0.
	\end{equation*}
	The same inequality is true if $\lambda_i = 0$.

	Case~2, $C_i(X) > 0$:
	The constraint in \eqref{eq:discrete_riesz_representation_4} implies
	$DC_i(X)\,V_h < 0$.
	Together with
	$\lambda_i \, t^0 \le M$,
	we obtain
	\begin{equation*}
		\lambda_i \, t^0 \, DC_i(X)\,V_h
		\ge
		M \, DC_i(X)\,V_h.
	\end{equation*}

	Now,
	the claim follows
	from these two cases and the representation \eqref{eq:derivative_merit_function}
	of $\varphi'(0)$.
\end{proof}

The overall line-search with backtracking is performed as follows.
We choose two parameters, $\sigma \in (0,1)$ and $\beta \in (0,1)$.
Then, we select the smallest non-negative integer $k$
such that
the Armijo condition
\begin{equation}
	\label{eq:armijo_backtracking}
	\varphi( t^0 \, \beta^k )
	\le
	\varphi( 0 )
	+
	\sigma \, t^0 \, \beta^k \, \varphi'(0)
\end{equation}
holds.
Moreover, we need to check that the mesh quality
is not affected too badly by the deformation $V_h$.
Therefore, we check that
\begin{equation}
	\label{eq:mesh_quality}
	\frac12 \le \det(I + t^0 \, \beta^k \, DV_h) \le 2
	,
	\qquad
	\lVert t^0 \, \beta^k \, DV_h \rVert \le 0.3
\end{equation}
are satisfied in the entire domain.
Note that this amount to checking three inequalities
per cell of the mesh.

\subsection{An implementable algorithm}
\label{subsec:algorithm}
Now, we are in the position
to state an implementable algorithm,
see \cref{alg:alg}.

\begin{algorithm}[htb]
	\SetAlgoLined
	\KwData{Initial domain $\Omega_h$ with triangulation $\cT_h$\\
		Initial step size $t_0 > 0$,
		convergence tolerance $\varepsilon_{\textup{tol}} > 0$,\\
		Parameters $\beta \in (0,1)$, $\beta_M > 1$, $\sigma \in (0,1)$, $M > 0$
	}
	\KwResult{Improved domain $\Omega_h$}
	\For{$i \leftarrow 1$ \KwTo $\infty$}{
		Set initial step size for iteration $i$: $t^0 \leftarrow t_{i-1} / \beta$\;
		Set up and solve the convex QP \eqref{eq:discrete_riesz_representation_4}\;
		\If{$\sqrt{\abs{J_h'(\Omega_h;V_h)}} \le \varepsilon_{\textup{tol}}$}{
			STOP, the current iterate $\Omega_h$ is almost stationary\;
		}
		\While{$\varphi'(0) \ge 0$, cf.\ \eqref{eq:derivative_merit_function}}{
			Increase the parameter $M$ of the merit function: $M \leftarrow M \, \beta_M$\;
		}
		$k \leftarrow 0$\;
		\While{\eqref{eq:armijo_backtracking} or \eqref{eq:mesh_quality} is violated}{
			$k \leftarrow k + 1$\;
		}
		$t_i \leftarrow t^0 \, \beta^k$\;
		Move the domain and triangulation according to $\Omega_h \leftarrow (I + t_i \, V_h)(\Omega_h)$,
		$\cT_h \leftarrow (I + t_i \, V_h)(\cT_h)$\;
	}
	\caption{Solving a discretized problem}
	\label{alg:alg}
\end{algorithm}

\section{Numerical examples}
\label{sec:num_ex}
In this section, we present numerical examples that illustrate the performance of
the proposed algorithm and typical qualitative features of optimal convex shapes. 
We have implemented \cref{alg:alg} in Python.
For the finite-element discretization,
we utilized the FEniCS framework
\cite{AlnaesBlechtaEtAl2015,LoggWellsHake2012}.
The QP \eqref{eq:discrete_riesz_representation_4}
was solved using OSQP \cite{StellatoBanjacEtAl2017}.

Before we discuss the different examples,
we specify some of the required parameters:
\begin{equation*}
	\beta = \frac12,
	\qquad
	\sigma = \frac1{10},
	\qquad
	t_0 = 1,
	\qquad
	\beta_M = 10,
	\qquad
	M = 10^{-9},
	\qquad
	\varepsilon_{\textup{tol}}
	=
	10^{-6}.
\end{equation*}

\subsection{Example in 2 dimensions}
\label{subsec:ex1}
The first example shows that mesh deformations have to be carefully constructed
to avoid degenerate triangulations. We consider problem
\eqref{eq:prob} in two dimensions ($d = 2$), the objective is given by
\begin{equation*}
	\int_\Omega u \, \dx,
	\quad\text{i.e.,}\quad
	j(x, u, g)
	=
	u
\end{equation*}
and the right-hand side in the PDE is
\begin{equation*}
	f(x_1, x_2)
	=
	20 \, (x_1 + 0.4 - x_2^2)^2 + x_1^2 + x_2^2 - 1
	.
\end{equation*}
The motivation for using this right-hand side $f$ is as follows.
Since we are going to minimize $\int_\Omega u \, \dx$
all points $x$ with $f(x) \le 0$
are favorable since they decrease $u$
(due to the maximum principle).
Moreover, in the absence of a convexity constraint the 
optimal domain would be a subset of the non-convex level
$Z_0 = \{x \in \R^2 \mid f(x) \le 0\}$.
Therefore, the shape optimization problem
has to find a compromise between convexity of $\Omega$
and covering all of $Z_0$.

We choose the unit circle as the initial domain and 
started with a rather coarse discretization,
see the upper left plot in \cref{fig:solution}.
We use \cref{alg:alg} to optimize this discrete domain.
Afterwards, we applied a mesh refinement.
This process was repeated to obtain
optimal domains for five different refinements,
see \cref{alg:alg}.
The overall runtime was roughly half an hour.
The solution of the QPs \eqref{eq:discrete_riesz_representation_4}
dominate the overall runtime.
On the last three levels of discretization,
one solution of the QP \eqref{eq:discrete_riesz_representation_4}
was taking
0.05s, 0.5s and 15s, respectively.
The number of iterations (i.e., number of solutions of \eqref{eq:discrete_riesz_representation_4}) on the five refinement levels are
$36$, $36$, $83$, $95$ and $130$, respectively.
\begin{figure}[p]
	\centering
	\includegraphics[width=.28\textwidth]{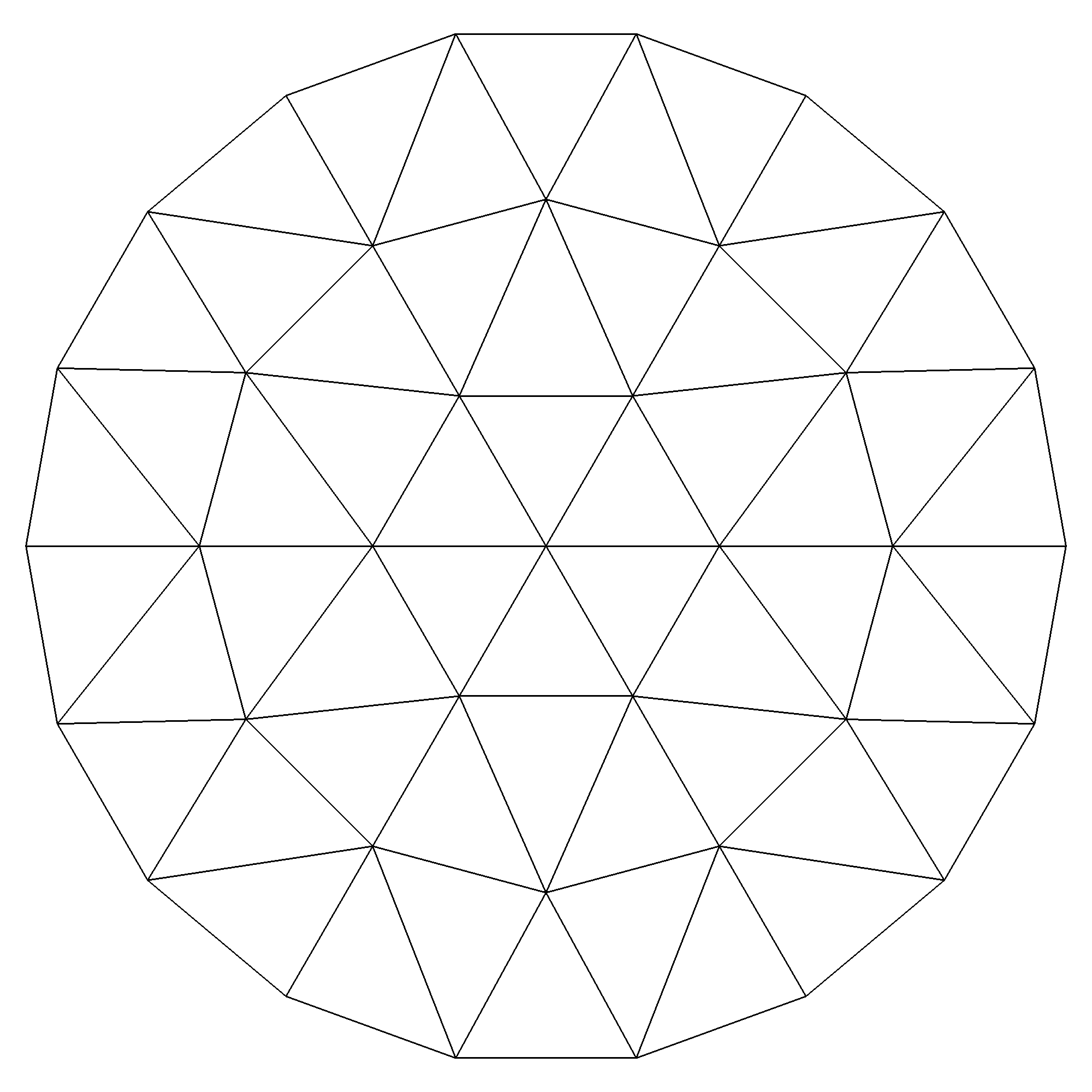}\hfill%
	\includegraphics[width=.28\textwidth]{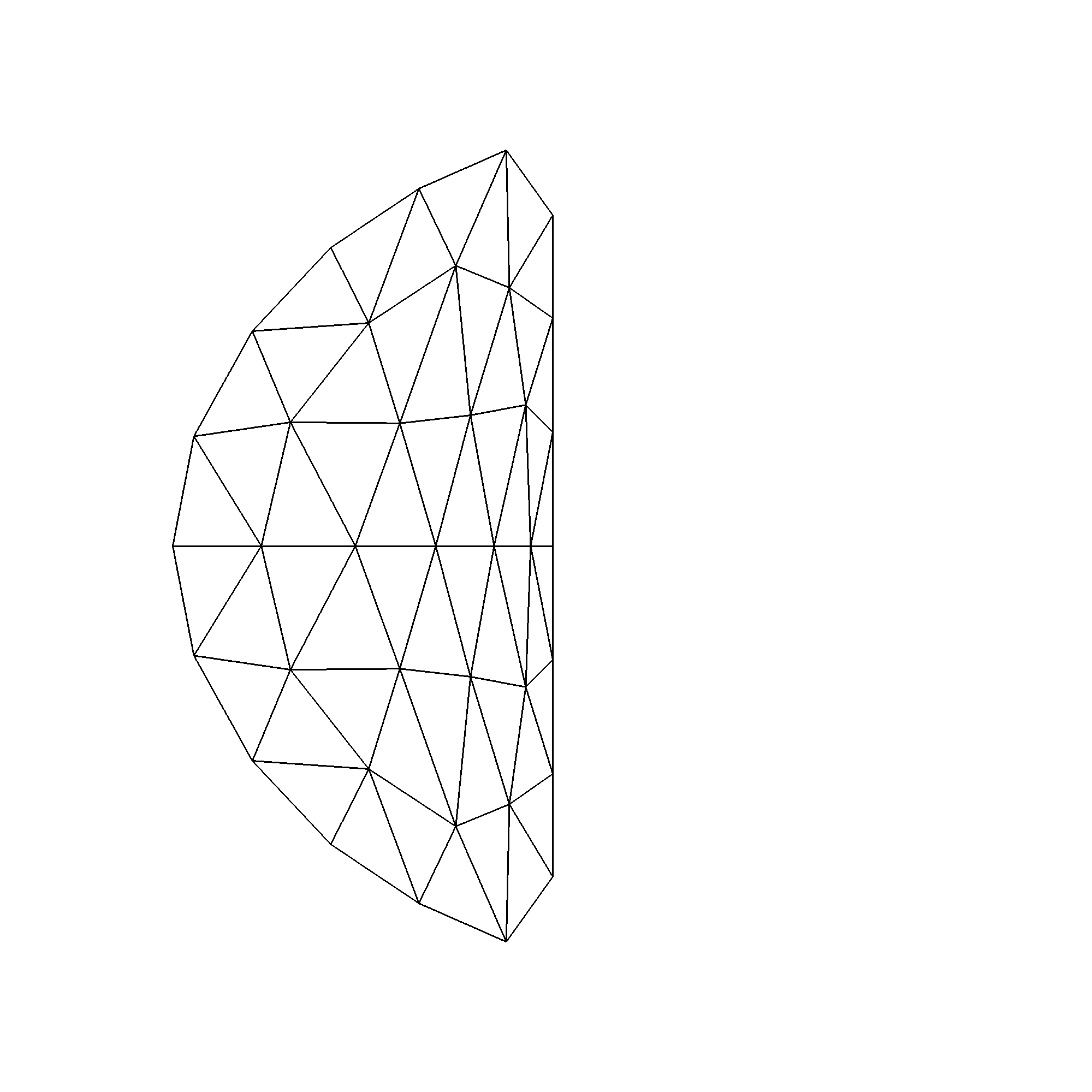}\hfill%
	\includegraphics[width=.28\textwidth]{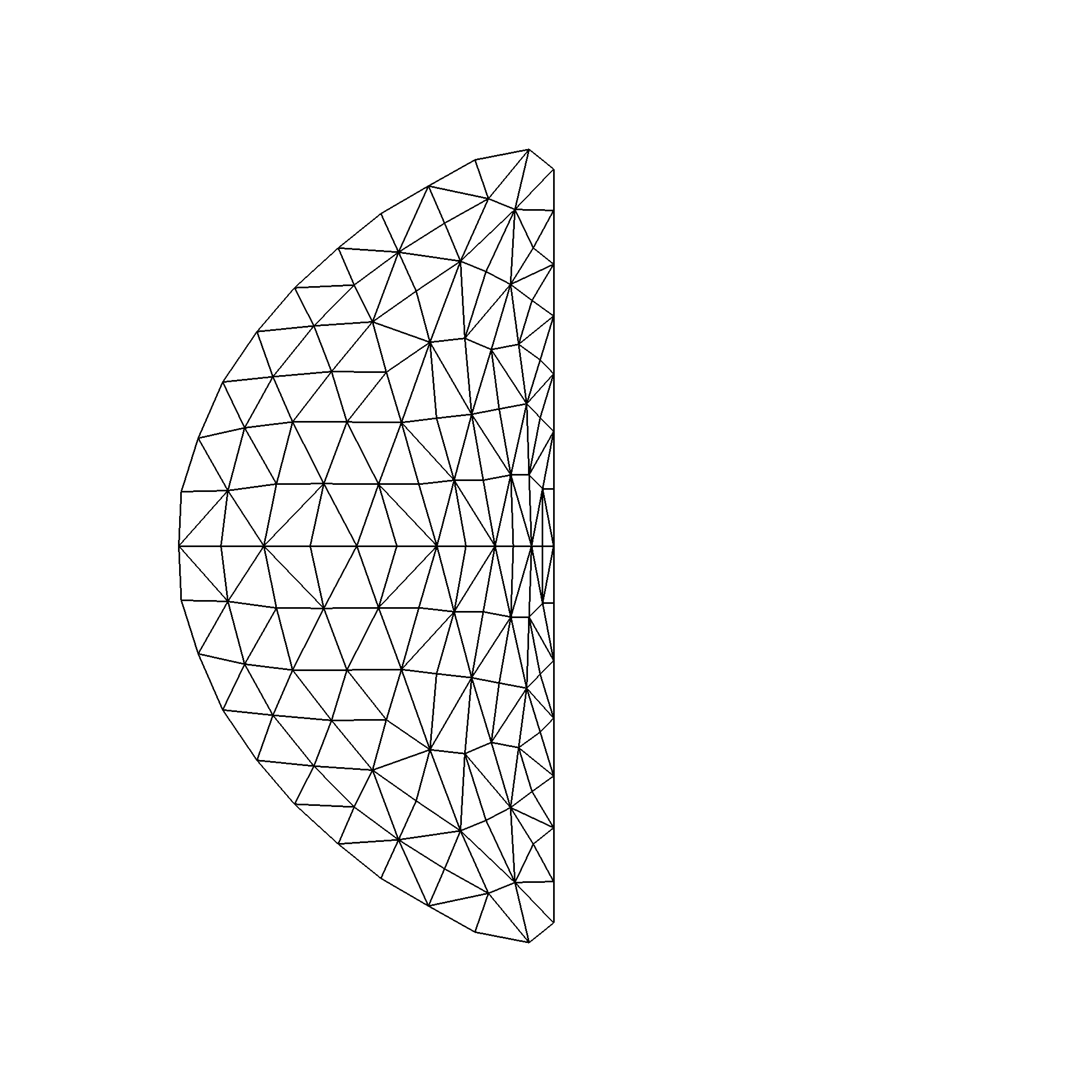}\\
	\includegraphics[width=.28\textwidth]{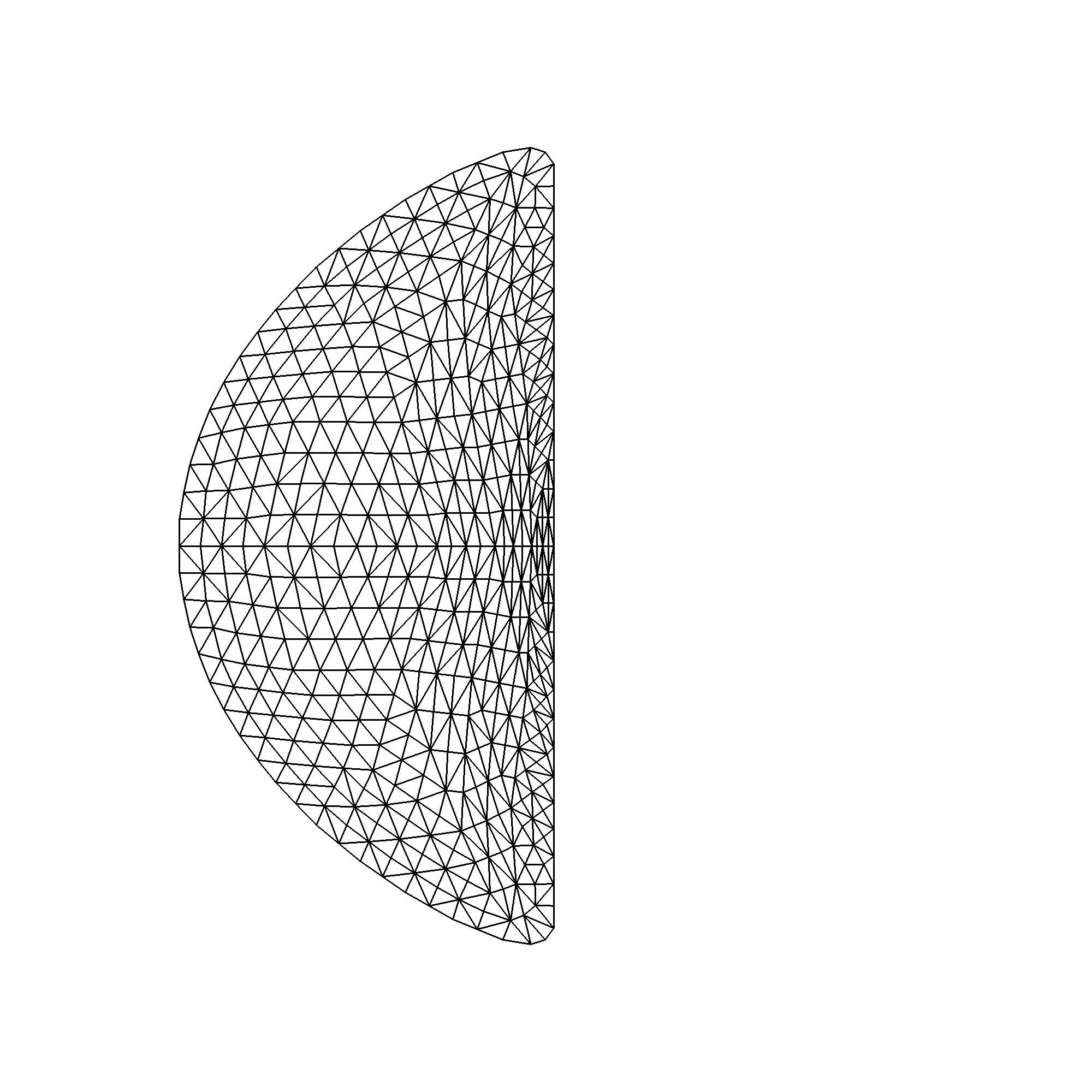}\hfill%
	\includegraphics[width=.28\textwidth]{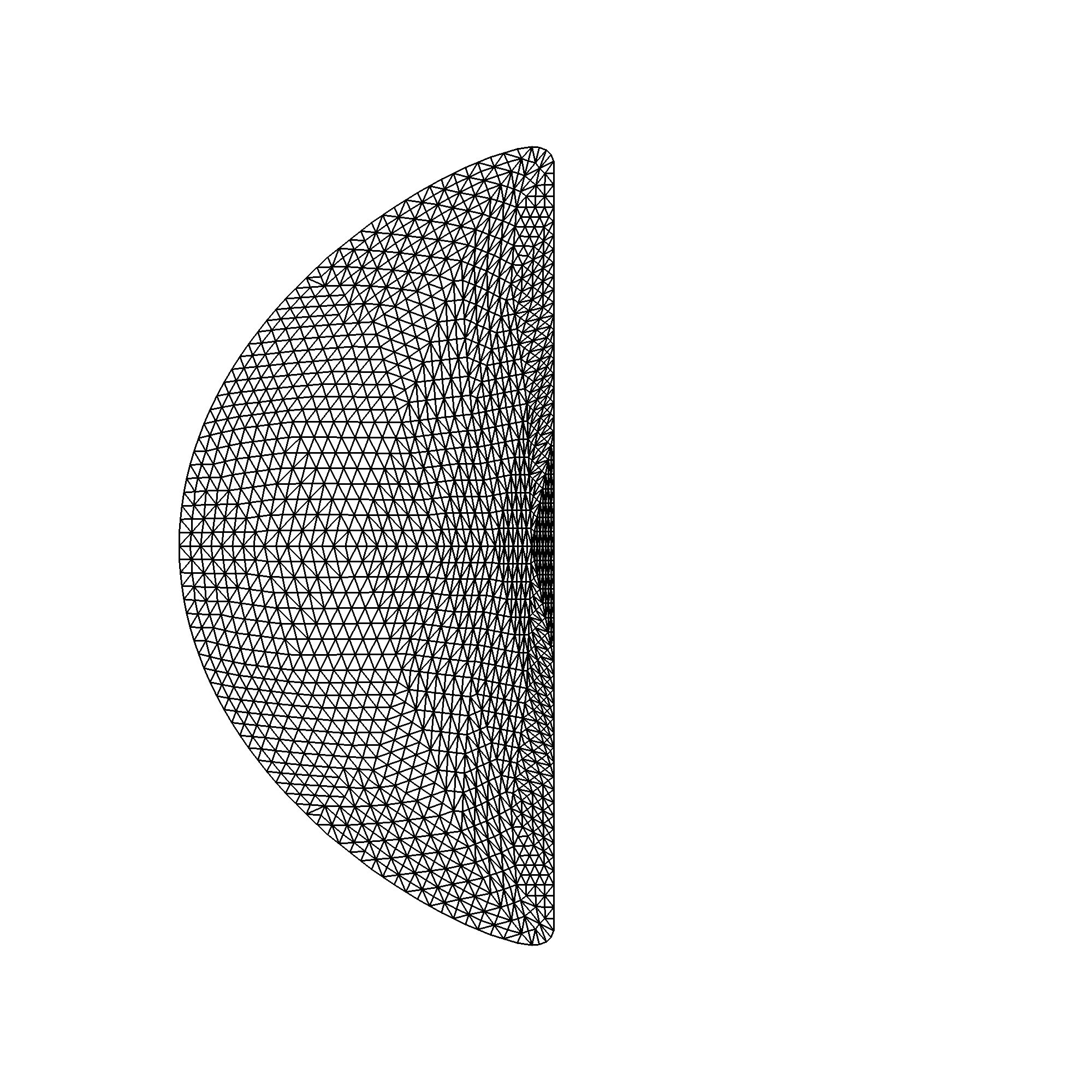}\hfill%
	\includegraphics[width=.28\textwidth]{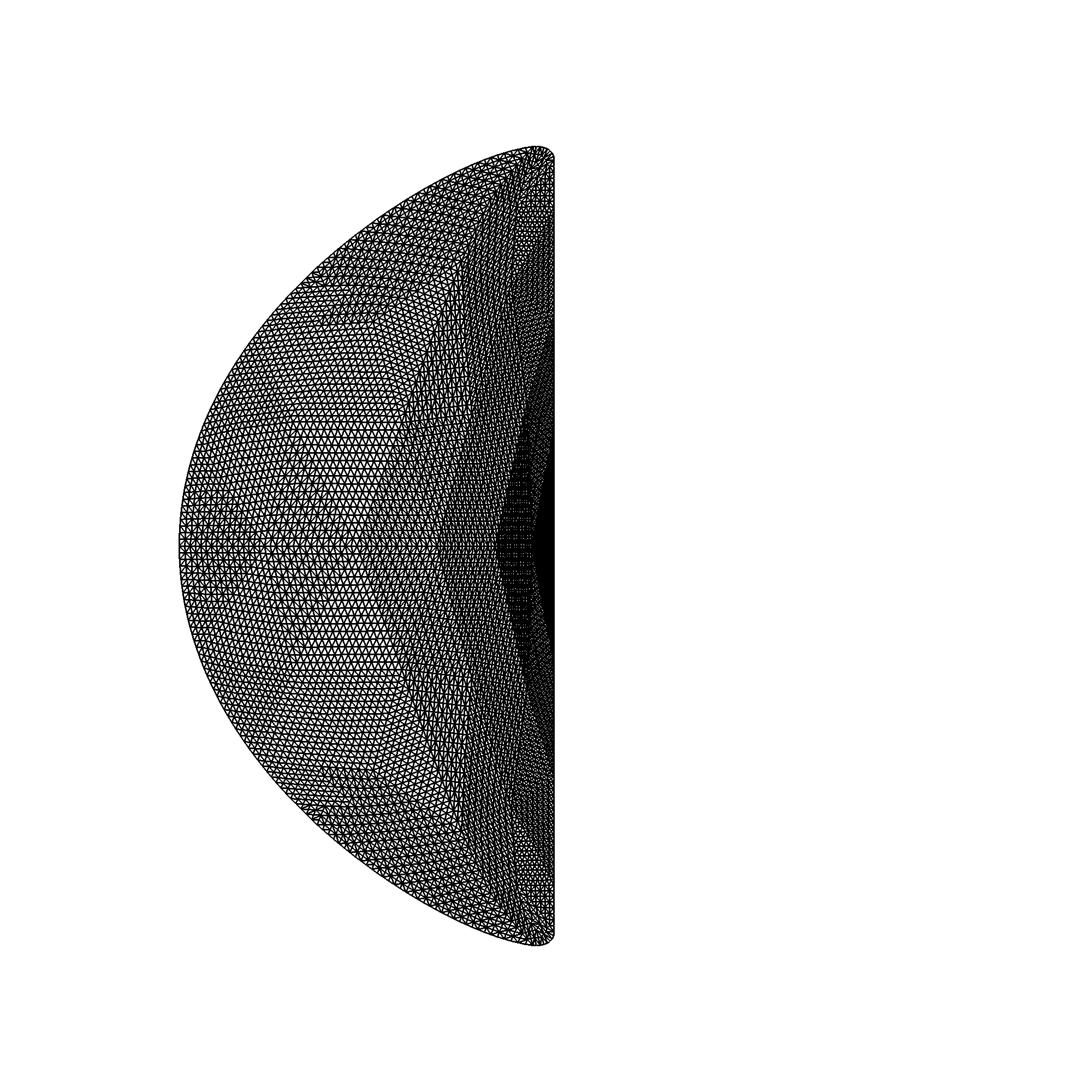}%
	\caption{Initial domain and coarse triangulation (upper left plot) and plots of optimal shapes 
         for refined triangulations for the example discussed in \cref{subsec:ex1}. The shapes of triangles 
         near the origin tend to deteriorate.}
	\label{fig:solution}
\end{figure}

\subsection{Example in 2 dimensions with non-smooth minimizer}
\label{subsec:ex2}
Our second example reveals that optimal convex shapes may have kinks and
that the $C^1$ regularity result of~\cite{Bucur2003} does not apply in the considered
framework. 
In this problem, we use the same data as in \cref{subsec:ex1}, but 
\begin{align*}
	f(x_1,x_2)
	=
	-\frac12 + \frac45 \, (x_1^2 + x_2^2)
	+
	2 \, &\sum_{i = 0}^{n-1}
	\exp\parens[\big]{-8\,( (x_1 - y_{1,i})^2 + (x_2 - y_{2,i})^2)}
	\\
	-
	&\sum_{i = 0}^{n-1}
	\exp\parens[\big]{-8\,( (x_1 - z_{1,i})^2 + (x_2 - z_{2,i})^2)}
\end{align*}
with $n = 5$ and
\begin{align*}
	y_{1,i} &= \sin( (i + 1/2) \, 2\,\pi / n),
	&
	z_{1,i} &= \frac65 \, \sin( i \, 2\,\pi / n),
	\\
	y_{2,i} &= \cos( (i + 1/2) \, 2\,\pi / n),
	&
	z_{2,i} &= \frac65 \, \cos( i \, 2\,\pi / n).
\end{align*}
This function $f$ is designed to a have a
five-fold rotational symmetry.
In particular, the contribution of the second summation operator
attracts the optimal shape towards the points $(z_{1,i}, z_{2,i})$,
whereas the first summation operator
repels the optimal shape from the points $(y_{1,i}, y_{2,i})$.
To solve the discrete problem,
we performed the same steps as for the previous example.
The initial mesh and the optimal mesh of five subsequent
refinements can be seen in \cref{fig:nonsmooth}.
\begin{figure}[p]
	\centering
	\includegraphics[width=.25\textwidth]{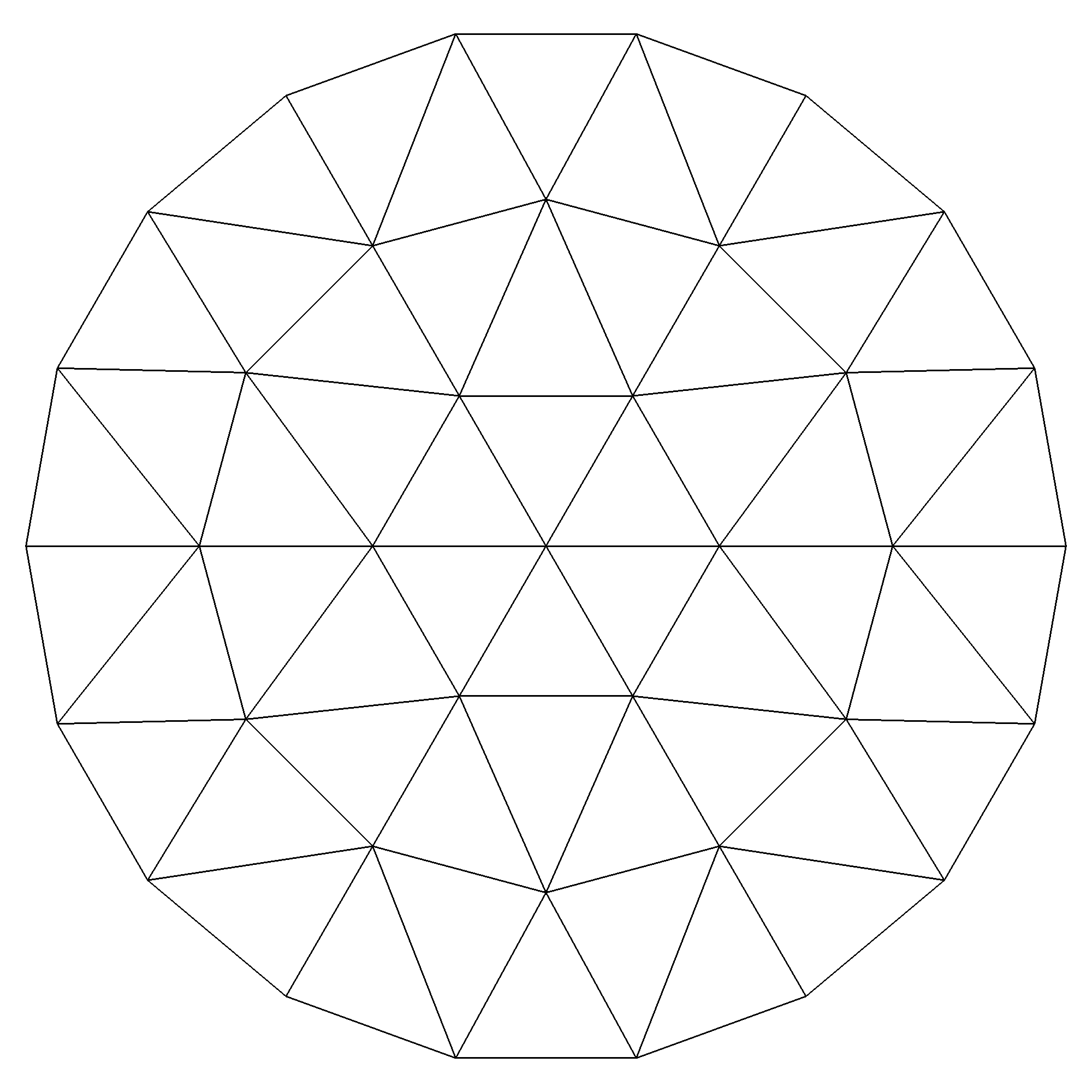}\hfill%
	\includegraphics[width=.28\textwidth]{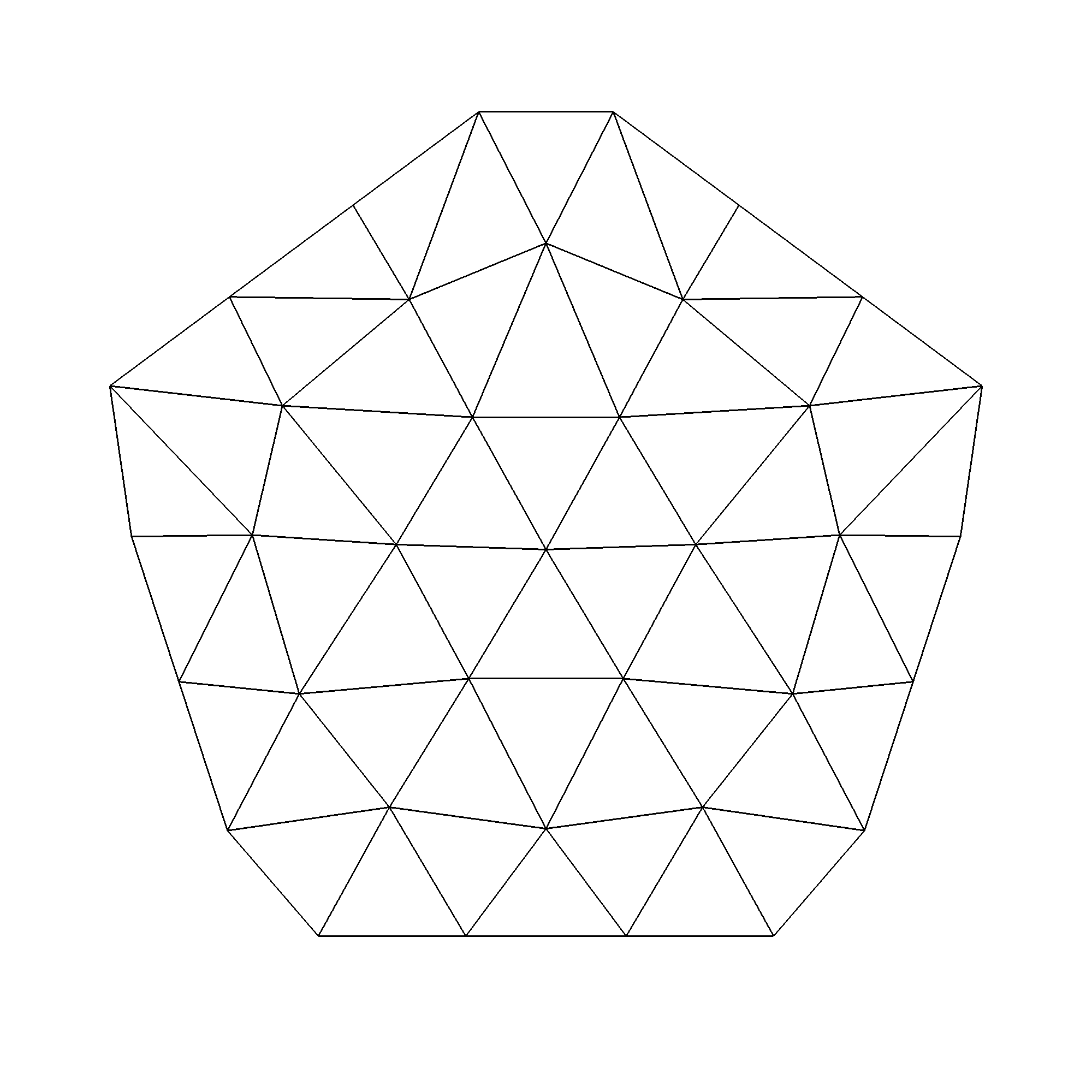}\hfill%
	\includegraphics[width=.28\textwidth]{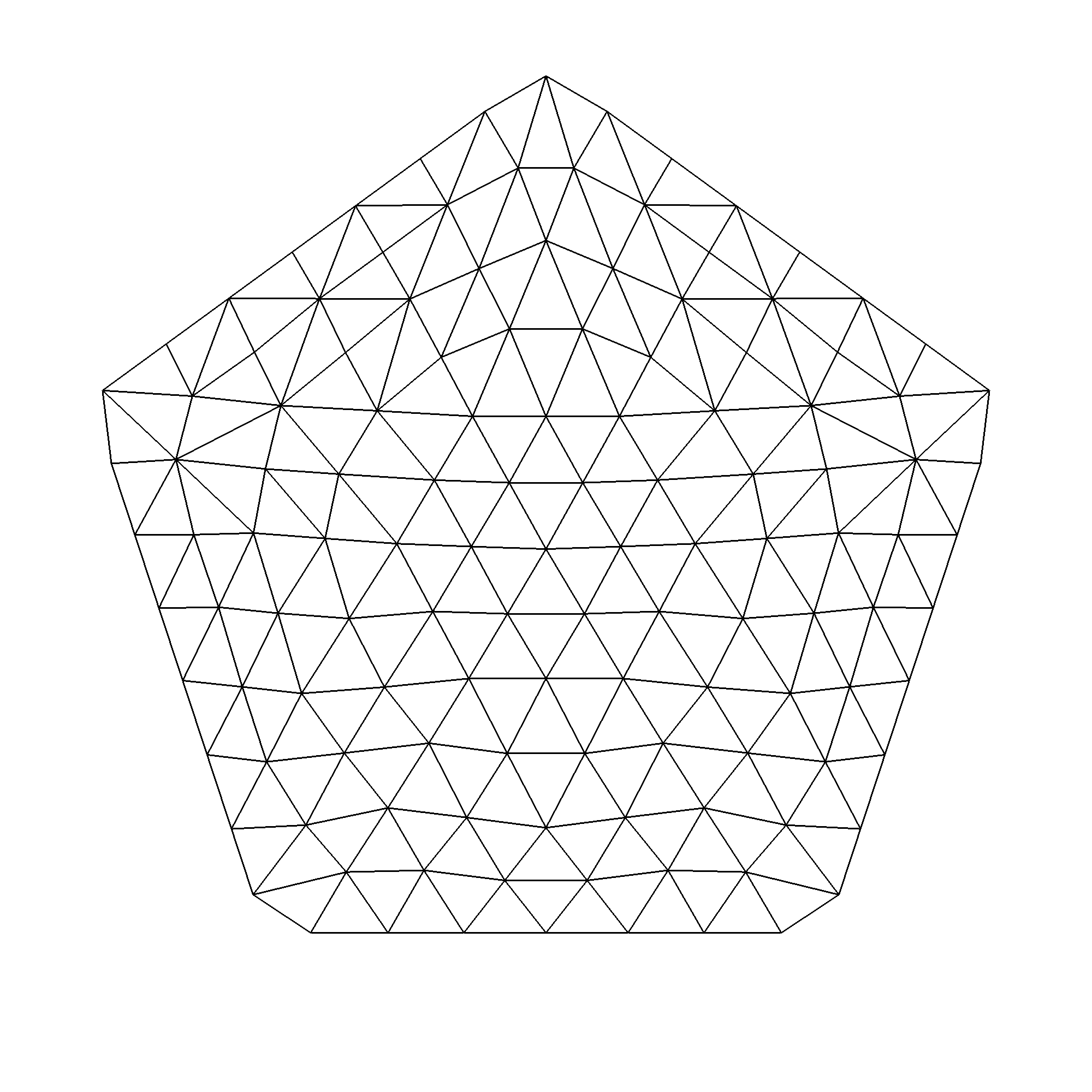}\\
	\includegraphics[width=.28\textwidth]{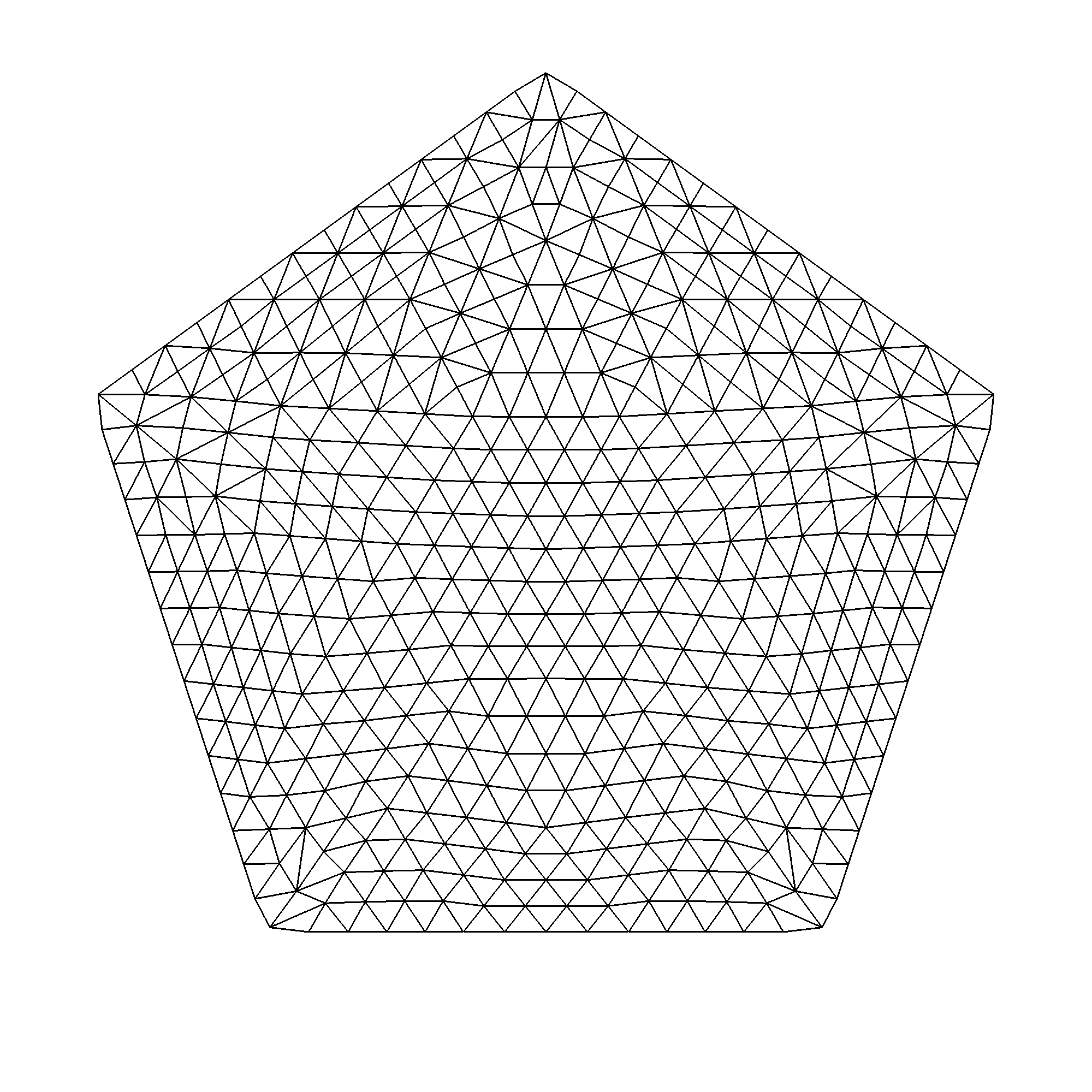}\hfill%
	\includegraphics[width=.28\textwidth]{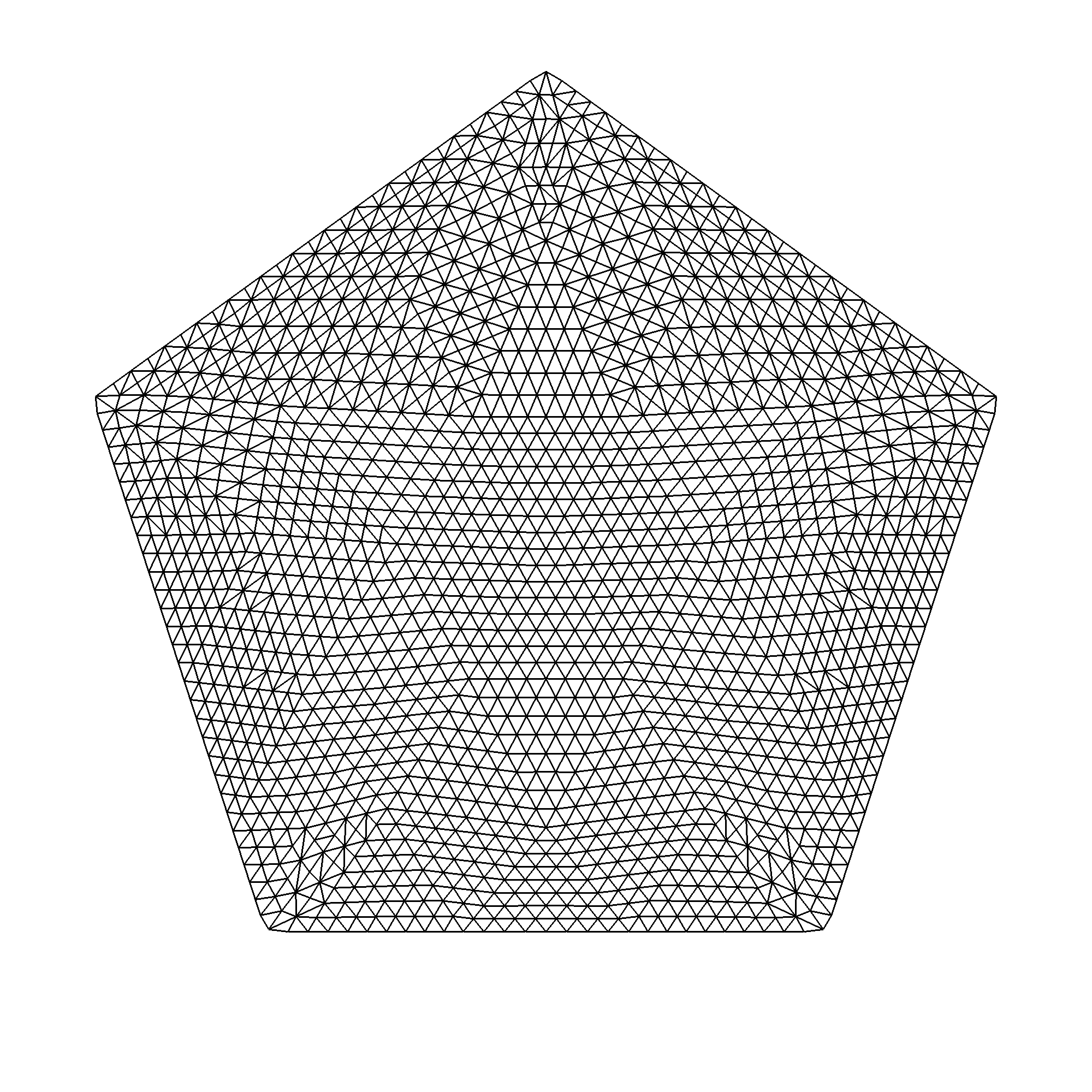}\hfill%
	\includegraphics[width=.28\textwidth]{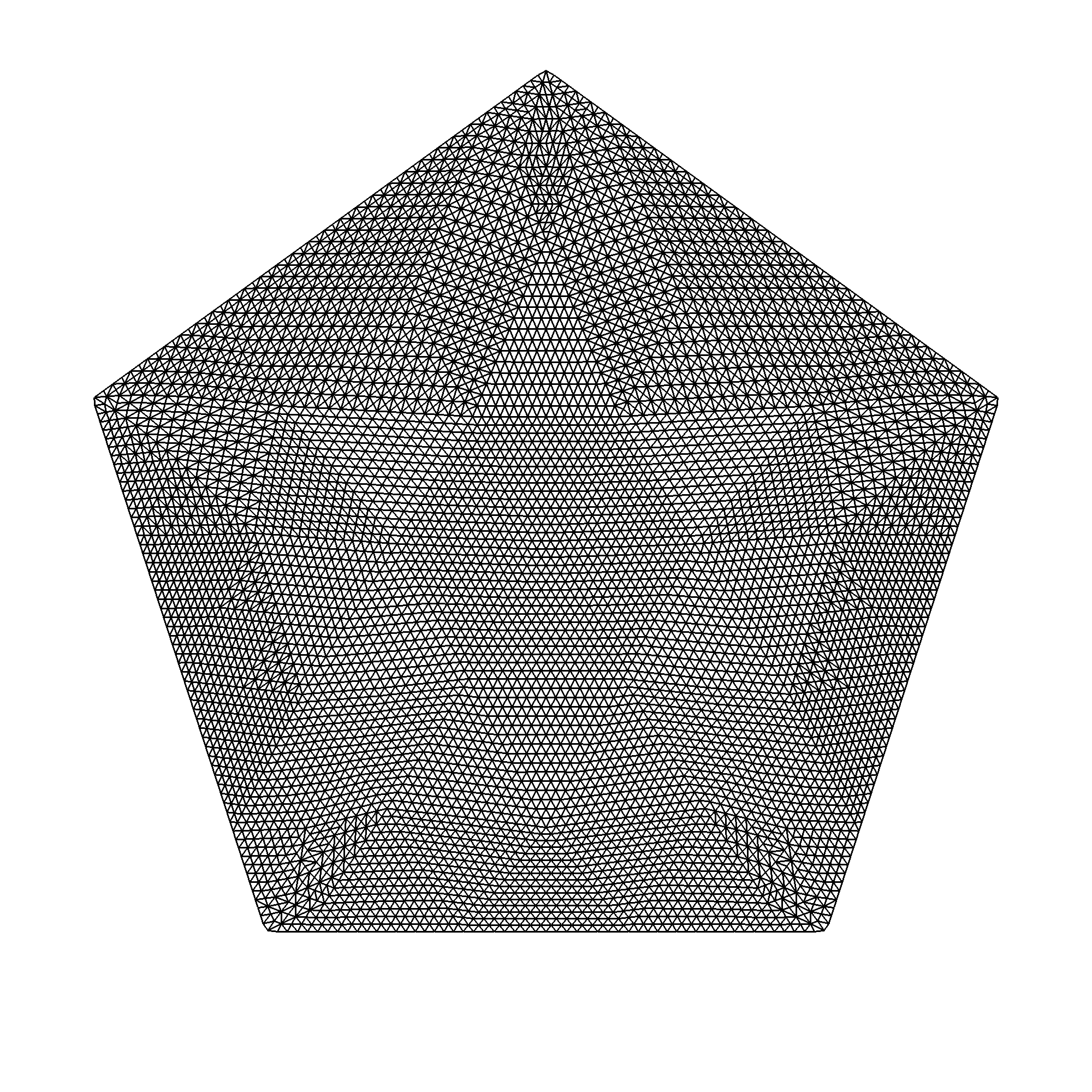}%
	\caption{Initial domain and coarse triangulation (upper left plot) and plots of optimal shapes 
        for refined triangulations for the example discussed in \cref{subsec:ex2}. Clearly defined kinks 
        occur on the boundaries.}
	\label{fig:nonsmooth}
\end{figure}
The
overall runtime was 4 minutes
and
the solution times of the QP \eqref{eq:discrete_riesz_representation_4}
are similar to previous example.
The number of iterations per refinement level
are
$19$, $20$, $18$, $15$ and $14$, respectively.

\subsection{Non-convergence in 3 dimensions}
\label{subsec:ex3}
As addressed briefly at the end of \cref{subsec:feasible_def_fields},
it is possible to apply \cref{alg:alg} also to three-dimensional shape
optimization problems with convexity constraint. Our third example shows
however that a direct discretization of the constraint can lead to 
non-convergence. Instead of Dirichlet boundary conditions we impose 
Neumann boundary conditions and include a lower order term in the state
equation, i.e., the modified shape optimization problem reads as follows:
\begin{equation*}
	\begin{aligned}
		\text{Minimize}\quad & \int_\Omega u(x) \, \dx \\
		\text{w.r.t.}\quad & \Omega \subset \R^3, u \in H^1(\Omega) \\
		\text{s.t.}\quad&
		-\Delta u + u = f\text{ in }\Omega, \quad \frac{\partial u}{\partial n} = 0 \text{ on }\partial\Omega, \\
		& \Omega \text{ convex and open}
	\end{aligned}
\end{equation*}
We chose $f(x) = x_1^2 + x_2^2 + x_3^2 - 1$.
Since the problem is rotationally symmetric, we expect that the minimizer is a ball,
which is obviously convex. We start with a tetrahedral grid on the cube
$[-1/2, 1/2]^3$, see the top left plot of \cref{fig:convex_sphere}
and solved the discretized problem on three different mesh levels.
The computational times was around 2 hours
and the solution of the QP \eqref{eq:discrete_riesz_representation_4}
took several minutes on the last mesh.
The numbers of iterations were
16, 14, 23, respectively.
The solutions are presented in \cref{fig:convex_sphere}.
\begin{figure}[p]
	\centering
	\includegraphics[width=.35\textwidth]{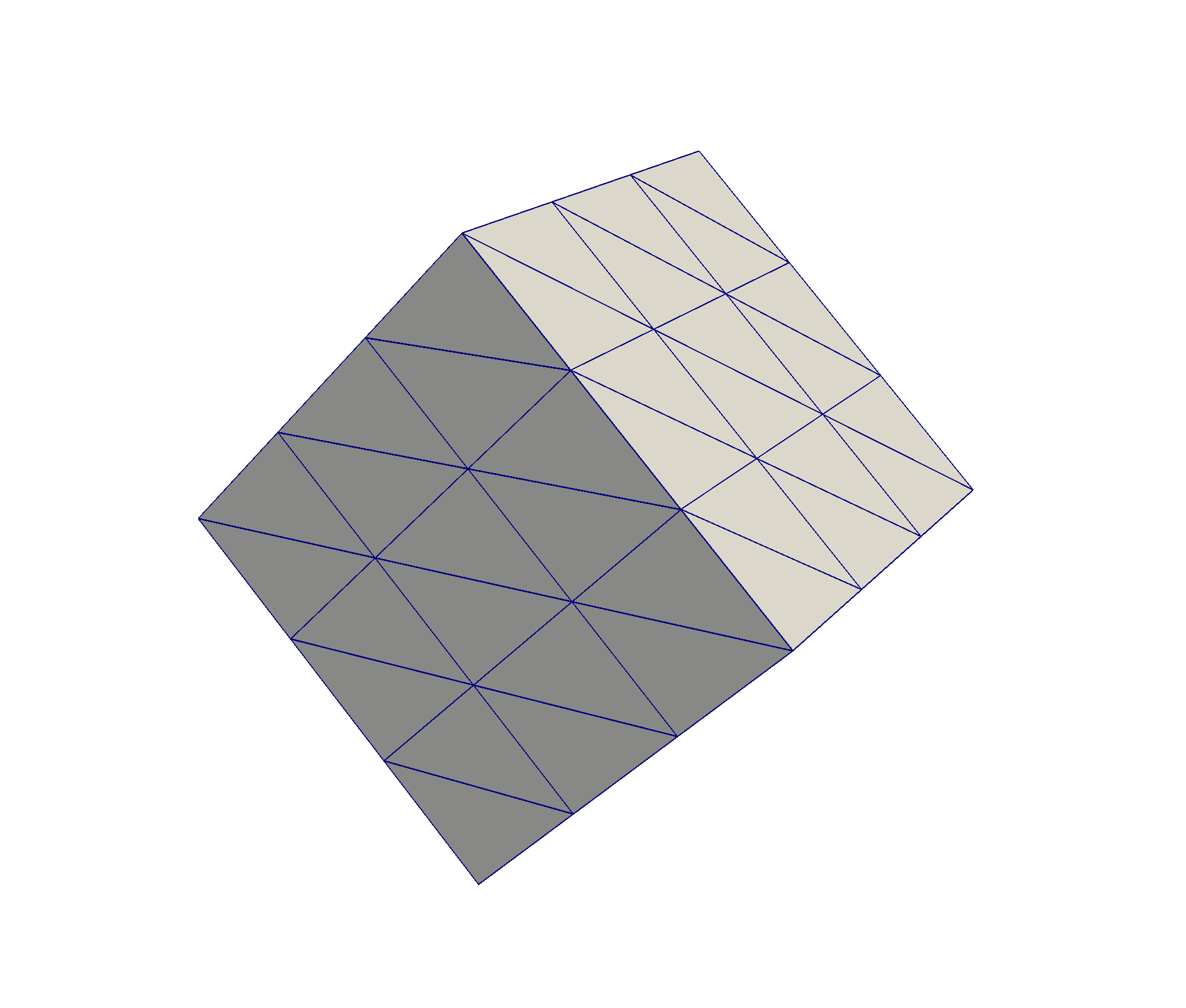}\hspace*{.1\linewidth}%
	\includegraphics[width=.35\textwidth]{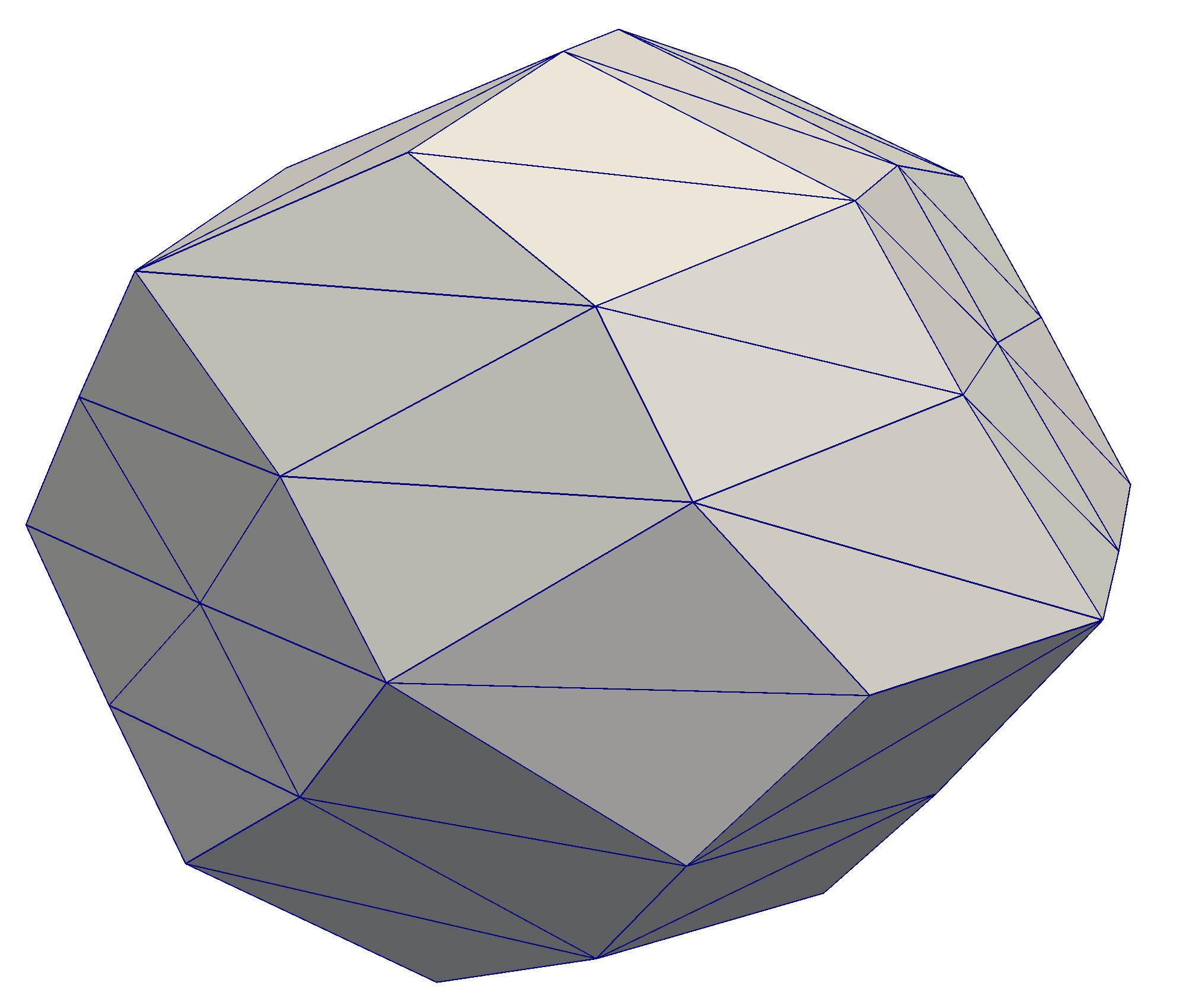}\\
	\includegraphics[width=.35\textwidth]{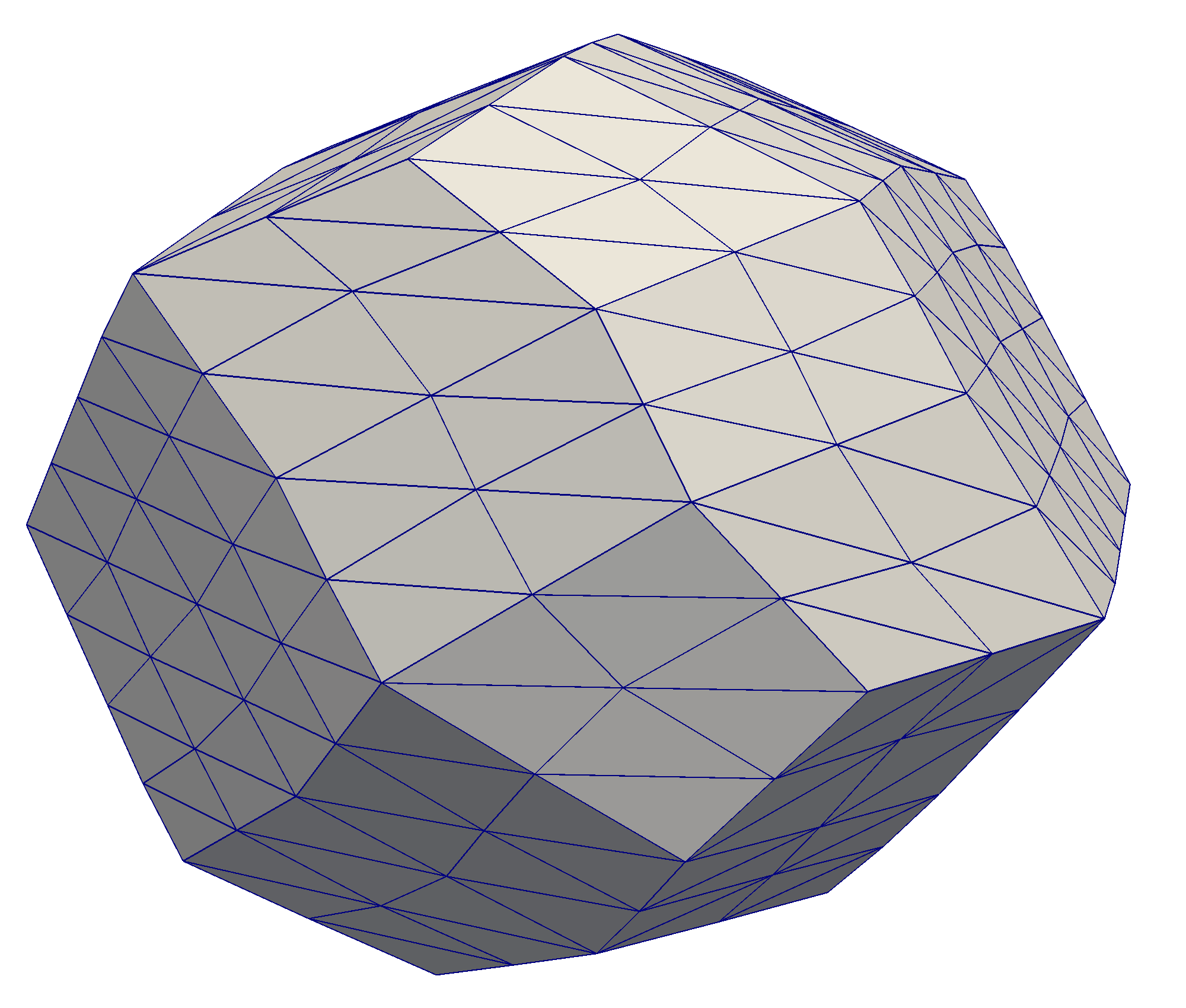}\hspace*{.1\linewidth}%
	\includegraphics[width=.35\textwidth]{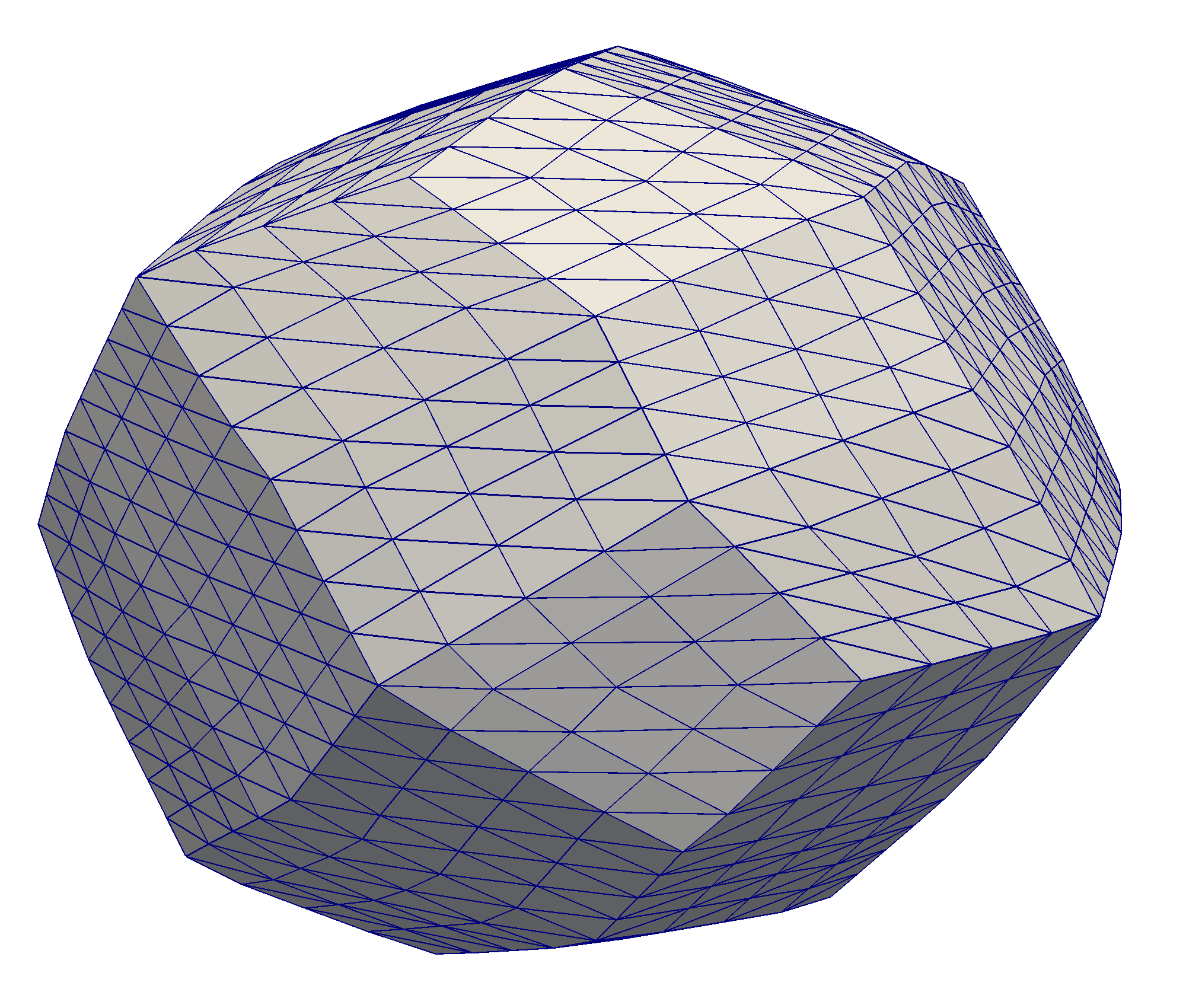}%
	\caption{Numerical solutions of the problem of \cref{subsec:ex3} with convexity constraints: 
        the rotational symmetry of the problem is strongly violated.}
	\label{fig:convex_sphere}
\end{figure}
As it can be seen in this figure,
the solutions are not rotationally symmetric and contain
many flat parts on their boundary.

If we solve the same problem, but without the convexity constraint,
we arrive at the bodies presented in \cref{fig:nonconvex_sphere}.
\begin{figure}[p]
	\centering
	\includegraphics[width=.35\textwidth]{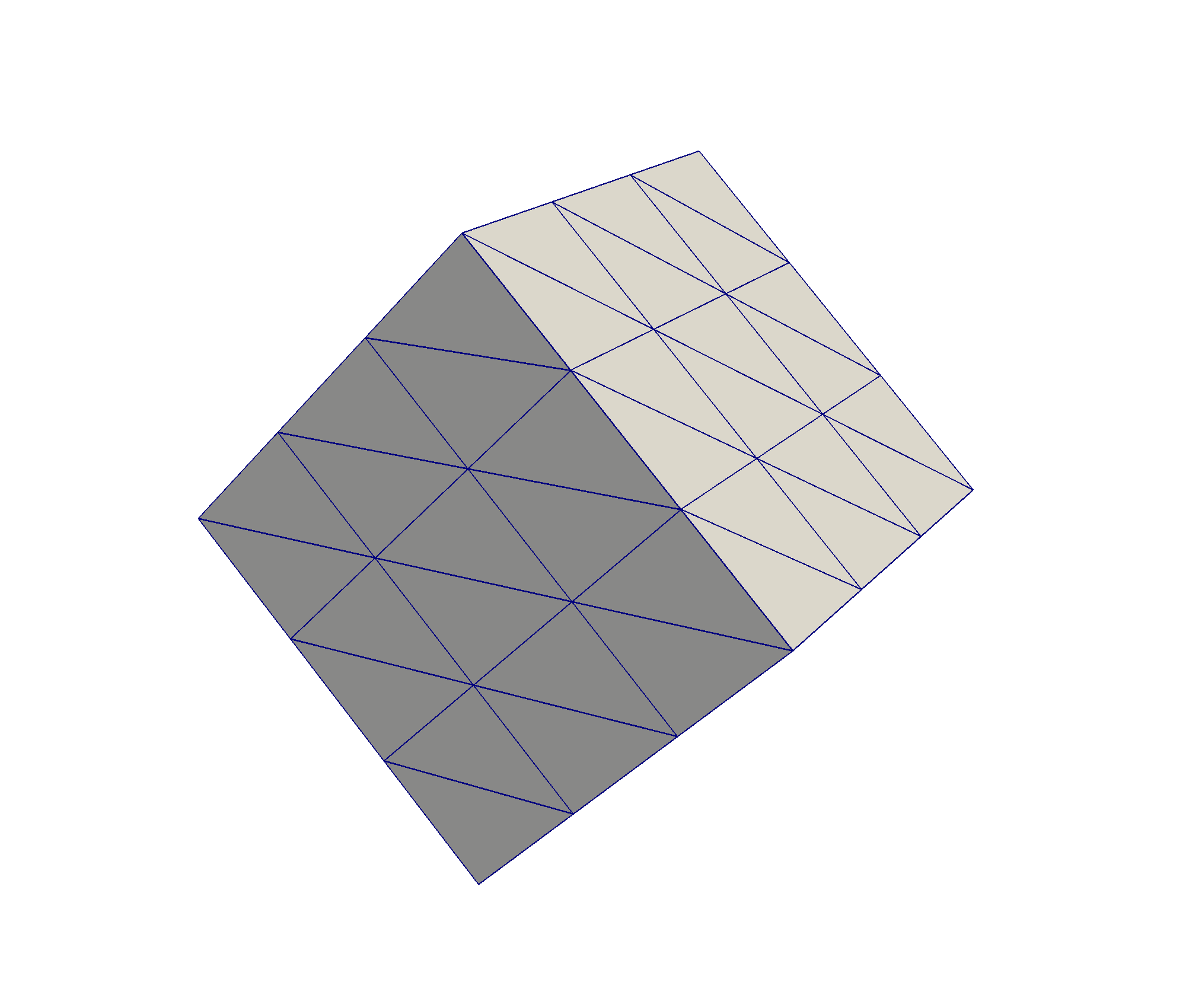}\hspace*{.1\linewidth}%
	\includegraphics[width=.35\textwidth]{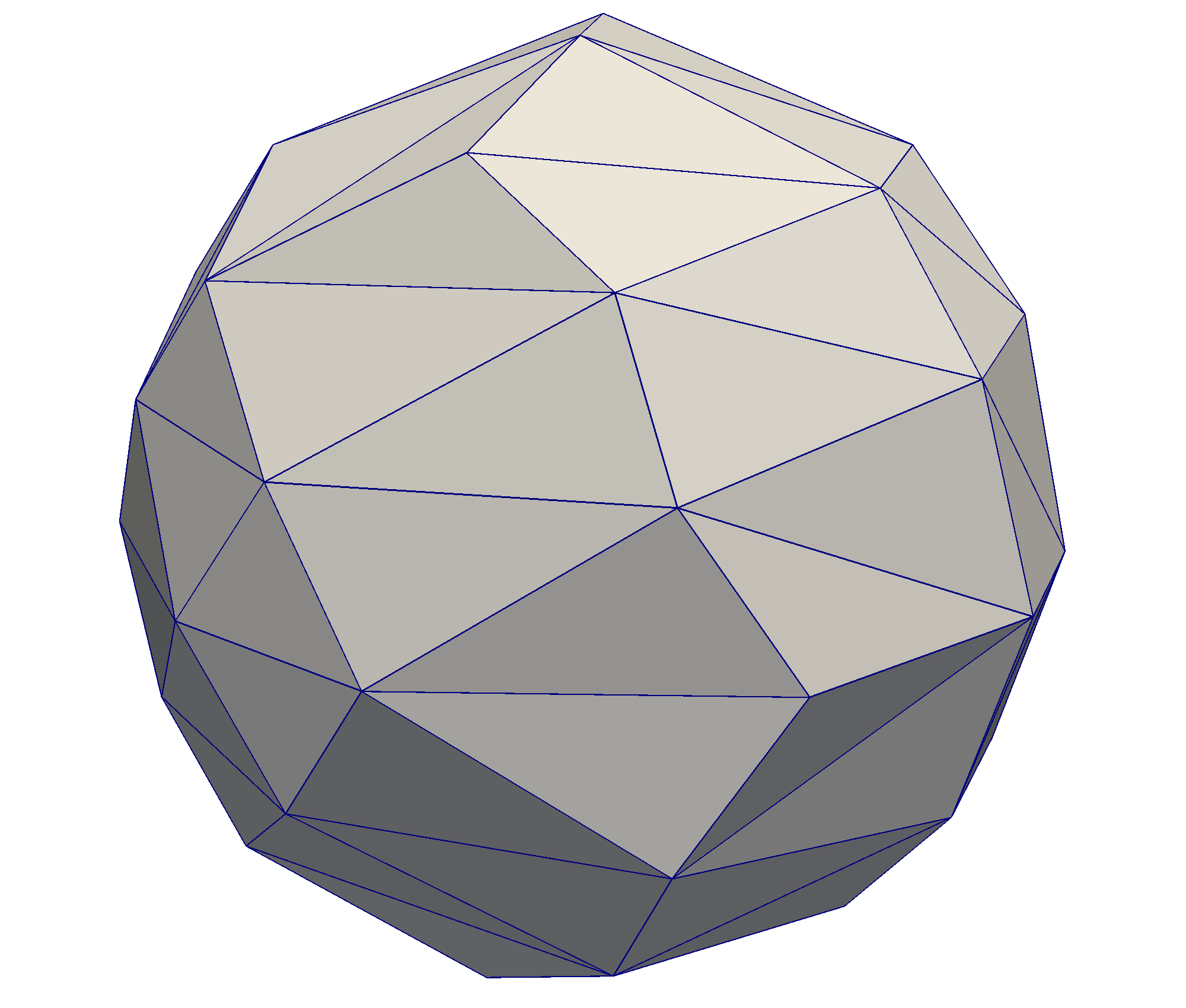}\\
	\includegraphics[width=.35\textwidth]{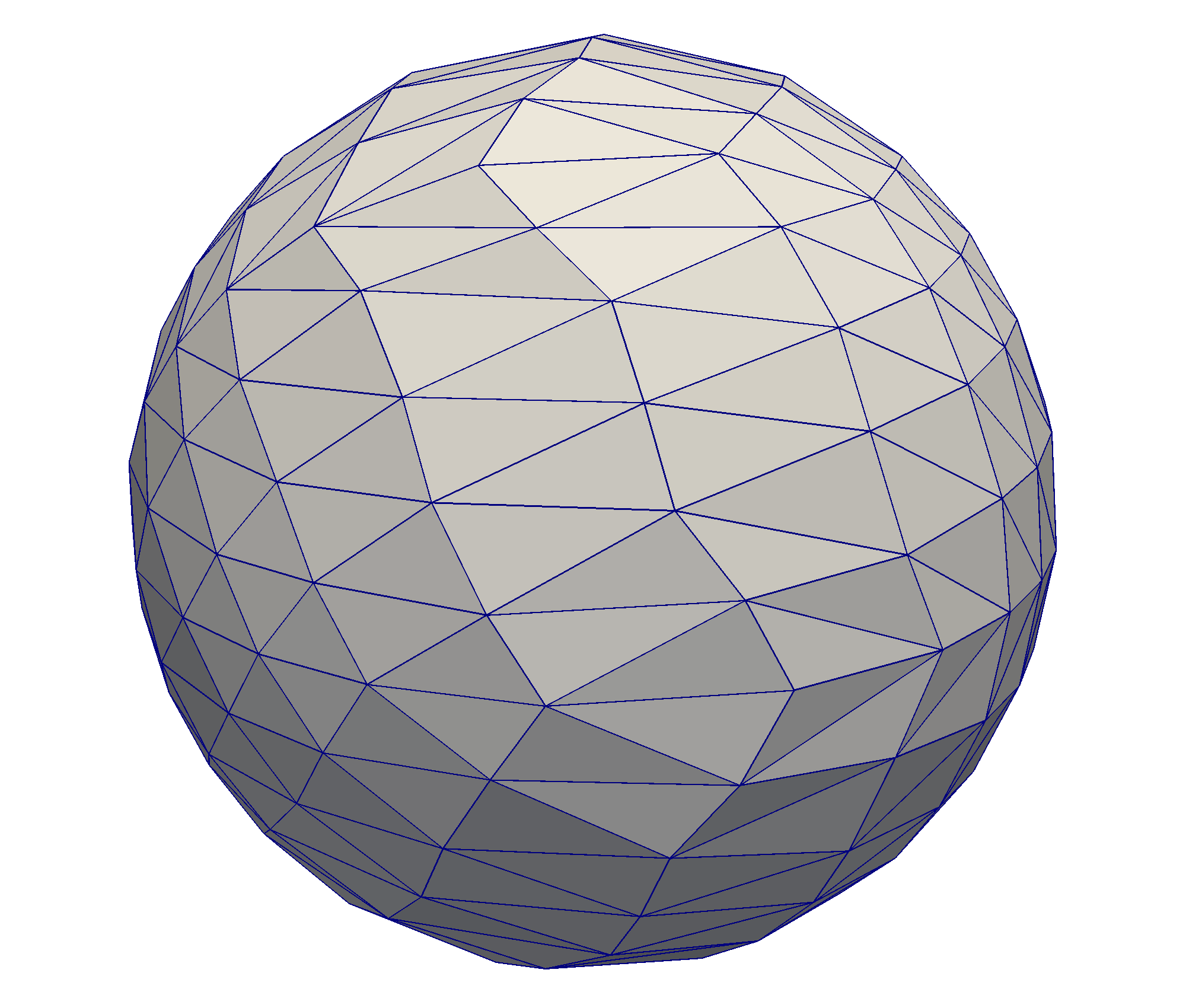}\hspace*{.1\linewidth}%
	\includegraphics[width=.35\textwidth]{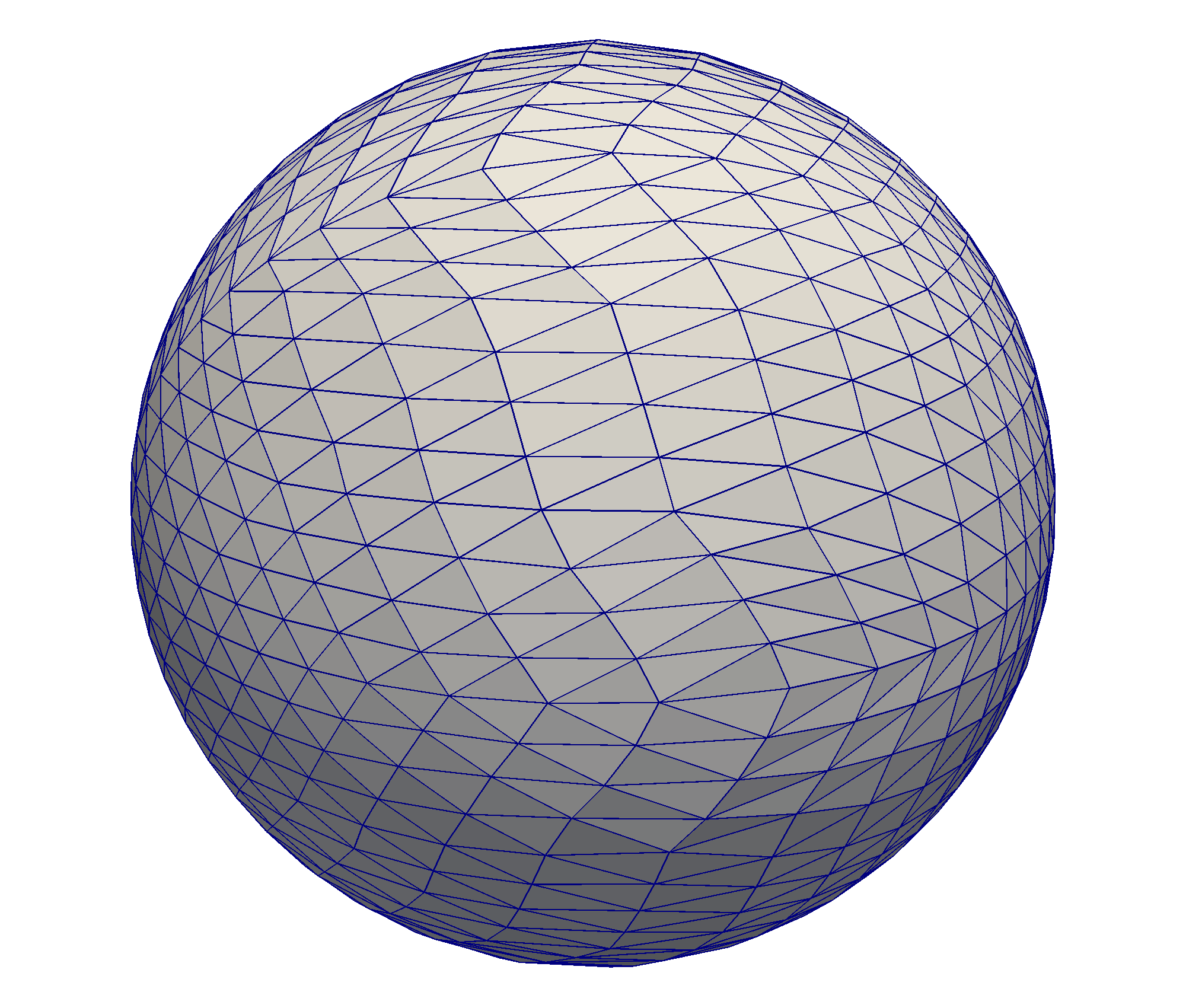}%
	\caption{Numerical solutions of the problem of \cref{subsec:ex3} without convexity constraints:
	discrete optimal shapes are nearly rotationally symmetric but not convex.}
	\label{fig:nonconvex_sphere}
\end{figure}
In these plots,
one can see that the optimal solution of the above problem
might be indeed a ball.
However, a closer inspection of \cref{fig:nonconvex_sphere}
reveals that the approximations are not convex,
there are some ``non-convex parts'' on the boundary.

Thus, the bodies in \cref{fig:convex_sphere}
are not approximating the optimal shape
and this shows that the convergence results of~\cref{sec:convergence}
do not carry over to the three-dimensional situation.

This non-convergence phenomenon is well known for the approximation of convex functions.
Indeed, in two dimensions it is not possible to approximate arbitrary convex functions
by convex finite element functions of order one on, e.g., uniform refinements of a fixed grid,
see \cite{ChoneLeMeur2001}.
However, for functions of order at least two,
there exist approximation results, see \cite{AguileraMorin2009,Wachsmuth2016:1}.

\subsection*{Acknowledgments}
This work is supported by DFG grants within the
\href{https://spp1962.wias-berlin.de}{Priority Program SPP 1962}
(Non-smooth and Complementarity-based Distributed Parameter Systems: Simulation and Hierarchical Optimization).

\ifbiber
	\printbibliography
\else
	\bibliographystyle{plainnat}
	\bibliography{references}
\fi
\end{document}

\section{Open problems}
Let $\Omega$ be an open, convex set.
Is there a nice characterization
of the set of admissible perturbations
\begin{equation*}
	\VV_{\textup{ad}}
	=
	\{ V \mid \text{the sets } (I + t\,V)(\Omega) \text{ are convex for small } t \}
\end{equation*}
?
With this set, one could formulate the stationarity conditions
\begin{equation*}
	J'(\Omega; V) \ge 0
	\qquad\forall V \in \VV_{\textup{ad}}
\end{equation*}

Let $\Omega_h$ be an ``almost'' convex polygon.
Is it possible to ``project'' this polygon into the set of convex polygons?
What is a possible metric for defining this projection?

Is it possible to use a second-order algorithm
(maybe similar to an SQP approach)?

Characterize convexity via mean curvature,
characterize feasible deformations $V$ via derivative of mean curvature

How to solve the QP \eqref{eq:discrete_riesz_representation_4}
efficiently?

Isoparametric P2? \cite{Wachsmuth2016:1}.